\bibliographystyle{plain}
\documentclass[10pt]{amsart}

\usepackage{amssymb}
\usepackage{graphicx}

\usepackage{hyperref}

\def\N{{\mathbb{N}}}
\def\Z{{\mathbb{Z}}}

\def\R{{\mathbb{R}}}

\title{Regular completions of $\Z^n$-free groups}

\author{Olga Kharlampovich}
\address{Department of Mathematics and Statistics, Hunter College, CUNY, 695 Park Avenue, New York, NY 10065} 
\email{okharlampovich@gmail.com}

\author{Alexei Myasnikov}
\address{Department of Mathematical Sciences, Stevens Institute of Technology, 1 Castle Point on Hudson, Hoboken, NJ 07030, USA} 
\email{amiasnikov@gmail.com}

\author{Denis Serbin}
\address{Department of Mathematical Sciences, Stevens Institute of Technology, 1 Castle Point on Hudson, Hoboken, NJ 07030, USA} 
\email{d.e.serbin@gmail.com}

\keywords{$\Z^n$-free group; $\Lambda$-tree; group action; regular completion}

\subjclass[2010]{20F65, 20E08, 05E18, 53C23}

\date{}

\markright{\protect Regular completions of $\Z^n$-free groups}
\markright{\protect Regular completions of $\Z^n$-free groups}

\pagestyle{myheadings}

\newtheorem{example}{Example}
\newtheorem{corollary}{Corollary}

\newtheorem{theorem}{Theorem}
\newtheorem{lemma}{Lemma}

\begin{document}

\begin{abstract}
In the present paper we continue studying regular free group actions on $\Z^n$-trees. We show that every finitely generated $\Z^n$-free group $G$ can be embedded into a finitely generated $\Z^n$-free group $H$ acting regularly on the underlying $\Z^n$-tree (we call $H$ a {\em regular $\Z^n$-completion} of $G$) so that the action of $G$ is preserved. Moreover, if $G$ is effectively represented as a group of $\Z^n$-words then the construction of $H$ is effective and $H$ is also effectively represented as a group of $\Z^n$-words.
\end{abstract}

\maketitle

\tableofcontents

\section{Introduction}
\label{se:intro}

The theory of $\Lambda$-trees (where $\Lambda = \R$) has its origins in the papers by I. Chiswell \cite{Chiswell:1976} and J. Tits \cite{Tits:1977}. In particular, in the latter paper the definition of $\R$-tree was given, while the former one established the fundamental connection between group actions on trees and length functions on groups introduced in 1963 by R. Lyndon (see 
\cite{Lyndon:1963}).  Length functions were introduced in an attempt to axiomatize cancelation arguments in free groups as well as free products with amalgamation and HNN extensions, and to generalize them to a wider class of groups. The main idea was to measure the amount of cancellation in passing to the reduced form of the product of reduced words in a free group and free constructions, and it turned out that the cancelation process could be described by rather simple axioms. Using simple combinatorial techniques, Lyndon described groups with {\em free} $\Z$-valued length functions (such length functions correspond to actions on simplicial trees without fixed points).

\smallskip

Later, in their very influential paper \cite{Morgan_Shalen:1984}, J. Morgan and P. Shalen linked group actions on $\R$-trees with topology and generalized parts of Thurston's Geometrization Theorem. Next, they introduced $\Lambda$-trees for an arbitrary ordered abelian group $\Lambda$ and the general form of Chiswell's construction. Thus, it became clear that abstract length functions with values in $\Lambda$ and group actions on $\Lambda$-trees are just two equivalent approaches to the same realm of group theory questions. The unified theory was further developed in the important paper by R. Alperin and H. Bass \cite{Alperin_Bass:1987}, where they state a fundamental problem in the theory of group actions on $\Lambda$-trees: find the group-theoretic information carried by an action on a $\Lambda$-tree (analogous to Bass-Serre theory), in particular, describe finitely generated groups acting freely on $\Lambda$-trees (so called $\Lambda$-free groups). One of the main breakthroughs in this direction is Rips' Theorem, that describes finitely generated $\R$-free groups (see \cite{GLP:1994, Bestvina_Feighn:1995}). The structure of finitely generated $\Z^n$-free groups can be deduced from \cite{Bass:1991} using Lyndon's results (see \cite{Lyndon:1963}) and inductive argument on $n$, while the structure of $\R^n$-free groups was clarified in \cite{Guirardel:2004} using ideas of \cite{Bestvina_Feighn:1995} and again induction on $n$.

\smallskip

Introduction of infinite $\Lambda$-words was one of the major recent developments in the theory of $\Lambda$-free groups. In \cite{Myasnikov_Remeslennikov_Serbin:2005} A. Myasnikov, V. Remeslennikov and D. Serbin showed that groups admitting faithful representations by $\Lambda$-words act freely on $\Lambda$-trees, while Chiswell proved the converse \cite{Chiswell:2005}. This gives another equivalent approach to the whole theory so that one can replace the axiomatic viewpoint of length functions along with many geometric arguments coming from $\Lambda$-trees by combinatorics of $\Lambda$-words. In particular, this approach 
allows one to naturally generalize powerful techniques such as Nielsen's method, Stallings' graph approach to subgroups, and Makanin-Razborov type of elimination processes from free groups to $\Lambda$-free groups (see \cite{Myasnikov_Remeslennikov_Serbin:2005, Myasnikov_Remeslennikov_Serbin:2006, Kharlampovich_Myasnikov:2005b, Kharlampovich_Myasnikov:2006, Kharlampovich_Myasnikov:2010, KMRS:2012, DM, KMS:2011a, KMS:2011b, Nikolaev_Serbin:2011a, Nikolaev_Serbin:2011b, Serbin_Ushakov:2011a}). In the case when $\Lambda$ is equal to either $\Z^n$ or $\Z^\infty$ all these techniques are effective, so, many algorithmic problems for $\Z^n$-free groups become decidable.

\smallskip

While studying $\Lambda$-free groups it becomes evident that it is necessary to introduce some natural restrictions on the action which could significantly simplify many arguments. Thus, given a group $G$ acting on a $\Lambda$-tree $\Gamma$, we say that the action is {\em regular with respect to $x \in \Gamma$} (see \cite{KMRS:2012} for details) if for any $g,h \in G$ there exists $f \in G$ such that $[x, f x] = [x, g x] \cap [x, h x]$. In fact, the definition above does not depend on $x$ and there exist equivalent 
formulations for length functions and $\Lambda$-words (see \cite{Promislow:1985, Myasnikov_Remeslennikov_Serbin:2005}). Roughly speaking, regularity of action implies that all branch-points of $\Gamma$ belong to the same $G$-orbit and it tells a lot about the structure of $G$ in the case of free actions (see \cite{KMS:2011b, KMRS:2012}). Now, given a finitely generated group $G$ acting freely on a $\Lambda$-tree $\Gamma$, several natural question arise: 
\begin{itemize}
\item When does $G$ admit a regular action on $\Gamma$? 
\item Is it possible to change the action of $G$ on $\Gamma$ in order to make it regular?
\item Is it possible to embed $G$ into a finitely generated $\Lambda$-free group $H$ which admits a regular action? Can one do it in an equivariant manner (in this case we call such $H$ a {\em regular $\Lambda$-completion} of $G$)?
\end{itemize}
In particular, the last question has a positive answer (see \cite{Lyndon:1963, Hoare:1979}) in the case when $\Lambda = \Z$ with $H$ being finitely generated (the construction of $H$ is effective in this case). The general case is approached in \cite{Chiswell_Muller:2010}, where the group $H$ is constructed but it is almost never finitely generated (even when $G$ is a finitely generated $\Z^n$-free group). 

In this paper we answer the third question above affirmatively and show that a $\Z^n$-completion of $G$ can be found effectively if one starts with an effective representation of $G$ by infinite words. In particular, the following theorem is proved.

{\bf Theorem 4.} {\em Let $G$ be a finitely generated subgroup of $CDR(\Z^n, X)$, where $X$ is arbitrary. Then there exists a finite alphabet $Y$ and an embedding $\phi : G \rightarrow H$, where $H$ is a finitely generated subgroup of $CDR(\Z^n, Y)$ with a regular length function, such that $|g|_X = |\phi(g)|_Y$ for every $g \in G$. Moreover, if $G$ has an effective hierarchy over $X$, then $H$ has an effective hierarchy over $Y$.}

\section{Preliminaries}
\label{sec:prelim}

Here we introduce the basics of the theory of $\Lambda$-trees (all the details can be found in \cite{Alperin_Bass:1987} and \cite{Chiswell:2001}), Lyndon length functions (see \cite{Lyndon:1963, Chiswell:1976}) and infinite words (see \cite{Myasnikov_Remeslennikov_Serbin:2005}).

\subsection{$\Lambda$-trees}
\label{subs:lambda_trees}

Let $\Lambda$ be an ordered abelian group (we refer the reader to the books \cite{Glass:1999} and \cite{Kopytov_Medvedev:1996}
regarding the general theory of ordered abelian groups) and $X$ a non-empty set. If a function $d: X \times X \to \Lambda$ satisfies the axioms of metric with $\R$ replaced by $\Lambda$, that is, for all $x, y, z \in X$
\begin{enumerate}
\item[(M1)] $d(x, y) \geqslant 0$,
\item[(M2)] $d(x, y) = 0$ if and only if $x = y$,
\item[(M3)] $d(x, y) = d(y, x)$,
\item[(M4)] $d(x, y) \leqslant d(x, z) + d(y, z)$,
\end{enumerate}
then the pair $(X, d)$ is called a  {\em $\Lambda$-metric space}.

The simplest example of a $\Lambda$-metric space is $\Lambda$ itself with the metric $d(a, b) = |a - b|$, for all $a, b \in \Lambda$. 

If $a, b \in \Lambda$ are such that $a \leqslant b$, define $[a, b]_\Lambda = \{x \in \Lambda \mid a \leqslant x \leqslant b\}$.

As usual, if $(X, d)$ and $(X', d')$ are $\Lambda$-metric spaces, an {\em isometry} from $(X, d)$ to $(X', d')$ is a mapping $f: X \rightarrow X'$ such that $d(x, y) = d'(f(x), f(y))$ for all $x, y \in X$. Thus, for $x, y \in X$, a {\em segment} $[x, y]$ in a $X$ is the image of an isometry $\alpha : [a, b]_\Lambda \rightarrow X$ for some $a, b \in \Lambda$ such that $\alpha(a) = x$ and $\alpha(b) = y$ (observe that $[x, y]$ is not unique in general). We call a $\Lambda$-metric space $(X, d)$ {\em geodesic} if at least one segment $[x, y]$ exists for all $x, y \in X$, and $(X, d)$ is {\em geodesically linear} if $[x, y]$ is unique for all $x, y \in X$.

Now, a $\Lambda$-metric space $(X, d)$ is called a {\em $\Lambda$-tree} if
\begin{enumerate}
\item[(T1)] $(X, d)$ is geodesic,
\item[(T2)] if two segments of $(X, d)$ intersect in a single point that is an endpoint of both, then their union is a segment,
\item[(T3)] the intersection of two segments with a common endpoint is also a segment.
\end{enumerate}

\begin{example}
\label{ex:lambda}
$\Lambda$ together with the metric $d(a, b) = |a - b|$ is a $\Lambda$-tree.
\end{example}

\begin{example}
\label{ex:lambda=Z}
A $\Z$-metric space $(X, d)$ is a $\Z$-tree if and only if there is a simplicial tree $\Gamma$ such that $X = V(\Gamma)$ and $d$ is the path metric in $\Gamma$.
\end{example}

We say that a group {\em $G$ acts on a $\Lambda$-tree} $X$ if any element $g \in G$ defines an isometry $g : X \rightarrow X$. $G$ acts on $X$ {\em freely} and {\em without inversions} if no non-trivial $g \in G$ stabilizes a segment in $X$ (a segment can be degenerate). In this case we say that $G$ is {\em $\Lambda$-free}. The action is {\em regular with respect to $x \in X$} if for any $g, h \in G$ there exists $f \in G$ such that $[x, f x] = [x, g x] \cap [x, h x]$ (see \cite{KMS:2011a}).

Given an action of $G$ on a $\Lambda$-tree $(X, d)$, for a point $x \in X$ one can define a function $l_x : G \to \Lambda$ by $l_x(g) = d(x, g x)$. Such a function is called a {\em based length function on $G$} and it is easy to check that $l_x$ satisfies the axioms
\begin{enumerate}
\item[(L1)] $\forall\ g \in G:\ l_x(g) \geqslant 0$ and $l_x(1) = 0$,
\item[(L2)] $\forall\ g \in G:\ l_x(g) = l_x(g^{-1})$,
\item[(L3)] $\forall\ g, f, h \in G:\ c_x(g,f) > c_x(g,h) \rightarrow c_x(g,h) = c_x(f,h)$,

\noindent where $c_x(g,f) = \frac{1}{2}(l_x(g) + l_x(f) - l_x(g^{-1} f))$.
\end{enumerate}
Moreover, if $G$ is $\Lambda$-free then
\begin{enumerate}
\item[(L4)] $\forall\ g \in G:\ l_x(g^2) > l_x(g)$,
\end{enumerate}
and regularity of the action implies the axiom
\begin{enumerate}
\item[(R)] $\forall\ g, h \in G,\ \exists\ u, g_1, h_1 \in G:$
$$g = f \circ g_1 \ \& \ h = f \circ h_1 \ \& \ l_x(f) = c_x(g,h),$$
\end{enumerate}
where $v w = v \circ w$ means that $l_x(v w) = l_x(v) + l_x(w)$.

\smallskip

Now, one can consider an abstract function $l : G \to \Lambda$ with the axioms (L1)--(L3), it is called a {\em Lyndon length function} on $G$. One can show that for such a function $l$ there exists a $\Lambda$-tree $(X, d)$ and a point $x \in X$ such that $l = l_x$ provided $c(g, f) = \frac{1}{2}(l(g) + l(f) - l(g^{-1} f)) \in \Lambda$ for all $f, g \in G$ (see, for example \cite[Theorem 2.4.6]{Chiswell:2001}). $l$ is called {\em free} if it satisfies the axiom (L4) and {\em regular} if it satisfies the axiom (R).

In this paper we are mostly interested in groups with regular free Lyndon length functions and below are several examples.

\begin{example}
\label{ex:0}
Let $F = F(X)$ be a free group on $X$. The length function
$$|\cdot | : F \rightarrow \mathbb{Z},$$
where $|f|$ is a natural length of $f \in F$ as a finite word, is regular since the common initial subword of any two elements of $F$ always exists and belongs to $F$.
\end{example}

\begin{example}
\label{ex:2}
In \cite{Myasnikov_Remeslennikov_Serbin:2005} it was proved that Lyndon's free $\Z[t]$-group has a regular free length function with values in $\Z[t]$.
\end{example}

\begin{example}
\label{ex:3} \cite{KMRS:2012}
Let $F = F(X)$ be a free group on $X$. Consider an HNN-extension
$$G = \langle F, s \mid u^s = v \rangle,$$
where $u, v \in F$ are such that $|u| = |v|$ and $u$ is not conjugate to $v^{-1}$. Then there is a regular free length function $l : G \rightarrow \Z^2$ which extends the natural integer-valued length function on $F$.
\end{example}

For more involved examples, we refer the reader to \cite{KMRS:2012}.

\subsection{Infinite words}
\label{subs:words}

Let $\Lambda$ be an ordered abelian group. $\Lambda$ is called {\em discretely ordered} if it has a minimal positive element. Let us fix a discretely ordered $\Lambda$ for the rest of this subsection. Hence, with a slight abuse of notation we denote the minimal positive element of $\Lambda$ by $1$ and the segment $[a, b]_\Lambda$ for $a, b \in \Lambda$ simply by $[a, b]$.

Now, following \cite{Myasnikov_Remeslennikov_Serbin:2005}, given a set $X = \{x_i \mid i \in I\}$, we put $X^{-1} = \{x_i^{-1} \mid i
\in I\},\ X^\pm = X \cup X^{-1}$, and define a {\em $\Lambda$-word} as a function of the type
$$w: [1, \alpha_w] \to X^\pm,$$
where $\alpha_w \in \Lambda,\ \alpha_w \geqslant 0$. The element $\alpha_w$ is called the {\em length} $|w|_X$ of $w$. Usually we simply write $|w|$ rather than $|w|_X$ when it is clear which underlying alphabet is meant.

In particular, $\Z$-words are finite words in ordinary sense.

Below we refer to $\Lambda$-words as {\em infinite words} usually omitting $\Lambda$ whenever it does not produce any ambiguity.

By $W(\Lambda, X)$ we denote the set of all infinite words. Observe, that $W(\Lambda, X)$ contains an empty word which we denote by $\varepsilon$. Operations of concatenation and inversion are defined on $W(\Lambda, X)$ in the usual way (see \cite{Myasnikov_Remeslennikov_Serbin:2005}).

An infinite word $w$ is {\em reduced} if it does not contain $x x^{-1},\ x \in X^\pm$ as a subword and we denote by $R(\Lambda, X)$ the set of all reduced infinite words. Clearly, $\varepsilon \in R(\Lambda, X)$. If the concatenation $u v$ of two reduced infinite words $u$ and $v$ is also reduced then we write $u v = u \circ v$.

For $u \in W(\Lambda, X)$ and $\beta \in [1, |u|]$ by $u_\beta$ we denote the restriction of $u$ on $[1, \beta]$. If $u \in R(\Lambda, X)$ and $\beta \in [1, |u|]$ then
$$u = u_\beta \circ {\tilde u}_\beta,$$
for some uniquely defined ${\tilde u}_\beta$.

An element $com(u,v) \in R(\Lambda, X)$ is called the (\emph{longest}) {\em common initial segment} of reduced infinite words $u$ and $v$ if
$$u = com(u,v) \circ \tilde{u}, \ \ v = com(u,v) \circ \tilde{v}$$
for some (uniquely defined) infinite words $\tilde{u}$, $\tilde{v}$ such that $\tilde{u}(1) \neq \tilde{v}(1)$. Note that $com(u, v)$ does not always exist.

Now, let $u, v \in R(\Lambda, X)$. If $com(u^{-1}, v)$ exists then
$$u^{-1} = com(u^{-1}, v) \circ {\tilde u}, \ \ v = com(u^{-1}, v) \circ {\tilde v},$$
for some uniquely defined ${\tilde u}$ and ${\tilde v}$. In this event put
$$u \ast v = {\tilde u}^{-1} \circ {\tilde v}.$$
The  product ${\ast}$ is a partial binary operation on $R(\Lambda, X)$.

\smallskip

An element $v \in R(\Lambda, X)$ is termed {\em cyclically reduced} if $v(1)^{-1} \neq v(|v|)$. We say that an element $v \in R(\Lambda, X)$ admits a {\em cyclic decomposition} if $v = c^{-1} \circ u \circ c$, where $c, u \in R(\Lambda, X)$ and $u$ is cyclically reduced. Observe that a cyclic decomposition is unique (whenever it exists). We denote by $CDR(\Lambda, X)$ the set of all words from $R(\Lambda, X)$ that admit a cyclic decomposition.

Now we consider {\em subgroups of $CDR(\Lambda, X)$}, that is, subsets of $CDR(\Lambda, X)$ closed with respect to $\ast$ and inversion of infinite words.

\begin{theorem} \cite{Myasnikov_Remeslennikov_Serbin:2005}
\label{co:3.1}
Any subgroup $G$ of $CDR(\Lambda, X)$ is a group with a free Lyndon length function $| \cdot | : G \to \Lambda$, where $|g|$ is the length of $g$ viewed as an element of $CDR(\Lambda, X)$.
\end{theorem}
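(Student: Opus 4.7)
The plan is to verify (i) that $(G, \ast)$ forms a group and (ii) that the length function $|\cdot|$ satisfies the axioms (L1)--(L4) of a free Lyndon length function. For (i), closure under $\ast$ and inversion places $g \ast g^{-1} \in G$ for any $g \in G$, and a direct computation from $com(g,g) = g$ shows $g \ast g^{-1} = \varepsilon$, so $\varepsilon \in G$ and $g^{-1}$ is a two-sided inverse. The identity $g \ast \varepsilon = g$ follows from $com(g^{-1},\varepsilon) = \varepsilon$. The remaining axiom, associativity of $\ast$, is the main combinatorial obstacle and the step I expect to be the hardest: one must verify $(u \ast v) \ast w = u \ast (v \ast w)$ by a case analysis on the relative lengths of $com(u^{-1}, v)$, $com(v^{-1}, w)$, and the surviving tails of $u$, $v$, $w$ after pairwise cancellation. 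Since lengths take values in $\Lambda$ rather than $\N$, one cannot induct on word length and must track cancellation abstractly through the partial $com$-operator.

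For (ii), (L1) and (L2) are immediate from the definitions of length and inversion. The key identity underlying (L3) is
$$c(g, f) = |com(g, f)|,$$
which one obtains from $|g^{-1} \ast f| = |g| + |f| - 2|com(g, f)|$ by unwinding the definition of $\ast$. Given this identity, (L3) reduces to an elementary observation: if $|com(g, f)| > |com(g, h)|$, then $g$ and $f$ coincide past the divergence point of $g$ and $h$, forcing $com(g, h) = com(f, h)$ and hence $c(g, h) = c(f, h)$.

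For freeness (L4), the hypothesis $G \subseteq CDR(\Lambda, X)$ is essential. For any nontrivial $g \in G$, write $g = c^{-1} \circ u \circ c$ with $u$ nonempty and cyclically reduced. Then $u \circ u$ is reduced, so $g \ast g = c^{-1} \circ u \circ u \circ c$ and
$$|g^2| = 2|c| + 2|u| > 2|c| + |u| = |g|.$$
Cyclic reducibility is precisely what prevents extra cancellation in the middle of $g \ast g$; without it (L4) could fail. Apart from associativity, all remaining verifications are routine manipulations of the $com$ and $\circ$ operators on infinite words.
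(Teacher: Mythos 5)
This theorem is imported from the cited reference \cite{Myasnikov_Remeslennikov_Serbin:2005}; the paper itself gives no proof, so your attempt can only be measured against the standard argument there. Your skeleton matches it: show $(G,\ast)$ is a group, then check (L1)--(L4). The pieces you do carry out are correct. In particular, the identity $c(g,f)=|com(g,f)|$, obtained from $|g^{-1}\ast f|=|g|+|f|-2|com(g,f)|$, is exactly the right engine for (L3), and your (L4) argument correctly isolates where the hypothesis $G\subseteq CDR(\Lambda,X)$ (rather than $R(\Lambda,X)$) is used: cyclic reducedness of $u$ in $g=c^{-1}\circ u\circ c$ forbids cancellation at the junction of $g\ast g$, giving $|g^2|=2|c|+2|u|>|g|$. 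One point you should make explicit: $com(g,f)$ exists for all $g,f\in G$ precisely because closure of $G$ under $\ast$ means $g^{-1}\ast f$ is defined, and existence of $com$ is what makes $c(g,f)$ a well-defined element of $\Lambda$ (a hypothesis needed even to speak of a Lyndon length function here).

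The genuine gap is that you name associativity of $\ast$ as ``the main combinatorial obstacle'' and then do not address it. That is not a routine verification to be deferred: it is where essentially all of the content of this theorem lives in the cited source, and it is delicate precisely for the reason you identify --- $\Lambda$-words admit no induction on length, the operation is only partial, and one must show $(u\ast v)\ast w=u\ast(v\ast w)$ under the standing assumption that all intermediate products are defined in $G$, tracking how $com(u^{-1},v)$ and $com(v^{-1},w)$ interact when the cancellations overlap (e.g.\ when $v$ cancels completely in one association but not obviously in the other). Without at least the case decomposition and the key overlap lemma, what you have is a correct plan with the hardest step marked ``to do,'' not a proof. Everything else in the proposal would survive once that step is supplied.
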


The converse is also true.

\begin{theorem} \cite{Chiswell:2005}
\label{chis}
Let $G$ have a free Lyndon length function $l : G \rightarrow \Lambda$, then there exists an embedding $\phi : G \rightarrow CDR(\Lambda, X)$ such that, $|\phi(g)| = l(g)$ for any $g \in G$.
\end{theorem}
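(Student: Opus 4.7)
The strategy is to realize $G$ as an isometry group of a $\Lambda$-tree via Chiswell's classical construction, then $G$-equivariantly label the tree so that the labels spelled out along the segment $[x_0, g x_0]$ yield the desired embedding into $CDR(\Lambda, X)$.

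First, I would apply \cite[Theorem 2.4.6]{Chiswell:2001} to the free Lyndon length function $l$ to obtain a $\Lambda$-tree $(T, d)$, a basepoint $x_0 \in T$, and an action of $G$ on $T$ by isometries satisfying $l(g) = d(x_0, g x_0)$ for every $g \in G$. Axiom (L4) translates into freeness of the action: no nontrivial element of $G$ fixes a point or inverts a segment. Because $\Lambda$ is discretely ordered, every geodesic segment $[p, q] \subseteq T$ is isometric to $[0, d(p, q)]_\Lambda$, and for each $\beta \in [1, d(p, q)]_\Lambda$ the points at distances $\beta - 1$ and $\beta$ from $p$ along $[p, q]$ form a \emph{unit step} $e_\beta$. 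In this way every geodesic of $T$ is canonically decorated with unit steps indexed by positions in $\Lambda$.

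Next, I would construct a $G$-equivariant labeling $\ell$ of the unit steps of $T$ by letters from $X^\pm$, required to reverse under orientation reversal and to assign pairwise distinct labels to the (possibly infinitely many) unit steps emanating from any given vertex. Since the action is free, choosing one representative in each $G$-orbit of vertices and labeling its outgoing steps injectively (with $X$ taken large enough) determines $\ell$ by $G$-translation. Define $\phi(g) \colon [1, l(g)]_\Lambda \to X^\pm$ by letting $\phi(g)(\beta)$ be the $\ell$-label of the $\beta$-th unit step of $[x_0, g x_0]$. Since $[x_0, g x_0]$ is a geodesic (no backtracking) and distinct outgoing directions from any vertex receive distinct $\ell$-labels, $\phi(g)$ is reduced with $|\phi(g)| = l(g)$; injectivity of $\phi$ follows from $l(g) > 0$ for $g \neq 1$, a consequence of (L1) and (L4).

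The heart of the argument is the homomorphism property $\phi(g h) = \phi(g) \ast \phi(h)$. For this I would consider the tripod with endpoints $x_0, g x_0, g h x_0$ in $T$: its median point $m$ satisfies $d(x_0, m) = \tfrac{1}{2}(l(g) + l(g h) - l(h)) = c(g, g h)$, and $[x_0, g x_0]$, $[g x_0, g h x_0]$, $[x_0, g h x_0]$ all pass through $m$. The $G$-equivariance of $\ell$ forces the portion of $[g x_0, g h x_0]$ running from $g x_0$ back toward $m$ to spell the formal inverse (after translation by $g$) of the tail of $\phi(g)$, so the combinatorial cancellation in the definition of $\ast$ aligns exactly with the geometric meeting at $m$, yielding $\phi(g h) = \phi(g) \ast \phi(h)$. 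To land inside $CDR(\Lambda, X)$, note that (L4) forces every nontrivial $g$ to act hyperbolically with an invariant axis $A_g$; projecting $x_0$ onto $A_g$ gives a point $p_g$, and the decomposition $[x_0, g x_0] = [x_0, p_g] \cup [p_g, g p_g] \cup [g p_g, g x_0]$ pulls back under $\ell$ to a cyclic decomposition $\phi(g) = c^{-1} \circ u \circ c$ with $u$ cyclically reduced. The main obstacle is producing $\ell$ with pairwise distinct outgoing labels at every vertex and then verifying that this combinatorial labeling faithfully mirrors the meeting of three segments at the median of a tripod; once $\ell$ is in hand, the remaining verifications are routine translations of tree geometry into combinatorial statements about $\Lambda$-words.
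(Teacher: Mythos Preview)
The paper does not supply its own proof of this statement: Theorem~\ref{chis} is simply quoted from \cite{Chiswell:2005} as background for the infinite-words formalism, with no argument given. So there is nothing in the paper to compare your proposal against.

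Your outline is nonetheless the standard route and is essentially correct: realize $l$ by a free action on a $\Lambda$-tree with basepoint, exploit discreteness of $\Lambda$ to speak of unit steps, label them $G$-equivariantly, and read off words along $[x_0, g x_0]$; the tripod argument gives the homomorphism property and the axis projection gives the cyclic decomposition. One step should be tightened: the sentence ``choosing one representative in each $G$-orbit of \emph{vertices} and labeling its outgoing steps injectively \ldots\ determines $\ell$ by $G$-translation'' does not by itself guarantee consistency, since an edge has two endpoints and the constraint $\ell(e^{-1}) = \ell(e)^{-1}$ ties together choices made at possibly different vertex representatives. The clean fix is to pass to the quotient graph $T/G$ (well-defined because the action is free and without inversions), choose an orientation, label positively oriented edges by pairwise distinct letters of $X$, and lift; triviality of vertex stabilizers then yields the ``distinct outgoing labels'' property automatically. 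With that adjustment the remainder of your argument goes through.
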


Moreover, it was shown in \cite{Khan_Myasnikov_Serbin:2007} that the embedding $\phi$ in Theorem \ref{chis} preserves regularity. Observe that regularity of the length function $| \cdot | $ on a subgroup $H$ of $CDR(\Lambda, X)$ means that $com(g, h) \in H$ for all $g, h \in H$.

\smallskip

Thus, $\Lambda$-free groups are precisely groups with free $\Lambda$-valued Lyndon length functions, which are precisely subgroups of $CDR(\Lambda, X)$ for an appropriate $X$. Given a $\Lambda$-free group $G$, usually we use $\cdot$ to denote the group operation when we view $G$ as an abstract group. At the same time, we can also view $G$ as a subgroup of $CDR(\Lambda, X)$, where elements of $G$ are represented by infinite words, so in this case we can use $\ast$ instead of $\cdot$. The same logic applies to subgroups of $CDR(\Lambda, X)$: we interchangeably use both $\ast$ and $\cdot$ to denote the group operation.

\subsection{Universal trees}
\label{subs:universal}

Let $G$ be a subgroup of $CDR(\Lambda, X)$ for some discretely ordered abelian group $\Lambda$ and a set $X$.

Briefly recall (see \cite{KMS:2011a} for details) how one can construct a universal $\Lambda$-tree $\Gamma_G$ for $G$. Every element $g \in G$ is a function
$$g: [1, |g|] \rightarrow X^{\pm},$$
with the domain $[1, |g|]$, which is a closed segment in $\Lambda$. Since $\Lambda$ can be viewed as a $\Lambda$-metric space, $[1,|g|]$ is a geodesic connecting $1$ and $|g|$, and every $\alpha \in [1,|g|]$ can be viewed as a pair $(\alpha, g)$. Let
$$S_G = \{(\alpha,g) \mid g \in G, \alpha \in [0,|g|]\}.$$
Since for every $f, g \in G$ the word $com(f, g)$ is defined, one can introduce an equivalence relation on $S_G$ as follows: $(\alpha, f) \sim (\beta,g)$ if and only if $\alpha = \beta \in [0, c(f, g)]$. Now, let $\Gamma_G = S_G / \sim$ and $\varepsilon = \langle 0, 1 \rangle$, where $\langle \alpha, f \rangle$ is the equivalence class of $(\alpha, f)$. It was shown in \cite{KMS:2011a} that $\Gamma_G$ is a $\Lambda$-tree with a designated vertex $\varepsilon$ and a metric $d : \Gamma_G \times \Gamma_G \rightarrow \Lambda$, on which $G$ acts by isometries so that for every $g \in G$ the distance $d(\varepsilon, g \cdot \varepsilon)$ is exactly $|g|$. Moreover, $\Gamma_G$ is equipped with the labeling function $\xi : (\Gamma_G - \{\varepsilon\}) \rightarrow X^\pm$, where $\xi(v) = g(\alpha)$ if $v = \langle \alpha, g \rangle$.

\smallskip

It is easy to see that the labeling $\xi$ is not equivariant, that is, $\xi(v) \neq \xi(g \cdot v)$ in general (even if both $v$ and $g \cdot v$ are in $\Gamma_G - \{\varepsilon\}$, which is not stable under the action of $G$). In the present paper we are going to introduce another labeling function for $\Gamma_G$ defined not on vertices but on ``edges'', stable under the action of $G$. With this new labeling $\Gamma_G$ becomes an extremely useful combinatorial object in the case $\Lambda = \Z^n$, but in general such a labeling can be defined for every discretely ordered $\Lambda$.

First of all, for every $v_0, v_1 \in \Gamma_G$ such that $d(v_0, v_1) = 1$ we call the ordered pair $(v_0, v_1)$ the {\em edge} from $v_0$ to $v_1$. Here, if $e = (v_0,v_1)$ then denote $v_0 = o(e),\ v_1 = t(e)$ which are respectively the {\em origin} and {\em terminus} of $e$. Now, if the vertex $v_1 \in \Gamma_G - \{\varepsilon\}$ is fixed then, since $\Gamma_G$ is a $\Lambda$-tree, there is exactly one point $v_0$ such that $d(\varepsilon, v_1) = d(\varepsilon, v_0) + 1$. Hence, there exists a natural orientation, with respect to $\varepsilon$, of edges in $\Gamma_G$, where an edge $(v_0, v_1)$ is {\em positive} if $d(\varepsilon, v_1) = d(\varepsilon, v_0) + 1$, and {\em negative} otherwise. Denote by $E(\Gamma_G)$ the set of edges in $\Gamma_G$. If $e \in E(\Gamma_G)$ and $e = (v_0, v_1)$ then the pair $(v_1, v_0)$ is also an edge and denote $e^{-1} = (v_1, v_0)$. Obviously, $o(e) = t(e^{-1})$. Because of the orientation, we have a natural splitting
$$E(\Gamma_G) = E(\Gamma_G)^+ \cup E(\Gamma_G)^-,$$
where $E(\Gamma_G)^+$ and $E(\Gamma_G)^-$ denote respectively the sets of positive and negative edges. Now, we can define a function $\mu : E(\Gamma_G)^+ \rightarrow X^\pm$ as follows: if $e = (v_0, v_1) \in E(\Gamma_G)^+$ then $\mu(e) = \xi(v_1)$. Next, $\mu$ can be extended to $E(\Gamma_G)^-$ (and hence to $E(\Gamma_G)$) by setting $\mu(f) = \mu(f^{-1})^{-1}$ for every $f \in E(\Gamma_G)^-$.

\begin{example}
\label{ex:1}
Let $F = F(X)$ be a free group on $X$. Hence, $F$ embeds into (in fact, coincides with) $CDR(\Z, X)$ and $\Gamma_F$ with the labeling $\mu$ defined above is just a Cayley graph of $F$ with respect to $X$. That is, $\Gamma_F$ is a labeled simplicial tree.
\end{example}

The action of $G$ on $\Gamma_G$ induces an action on $E(\Gamma_G)$ as follows: $g \cdot (v_0, v_1) = (g \cdot v_0, g \cdot v_1)$ for each $g \in G$ and $(v_0, v_1) \in E(\Gamma_G)$. It is easy to see that $E(\Gamma_G)^+$ is not closed under the action of $G$ but the labeling is equivariant as the following lemma shows (see also \cite[Lemma 3]{KMS:2011a}).

\begin{lemma}
\label{le:label_edges}
If $e, f \in E(\Gamma_G)$ belong to one $G$-orbit then $\mu(e) = \mu(f)$.
\end{lemma}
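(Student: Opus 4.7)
The plan is to reduce the statement to the atomic case of $g$ acting on a single positive edge and compute both sides explicitly. Since the action on edges satisfies $(g\cdot e)^{-1} = g\cdot (e^{-1})$, and $\mu$ on negative edges is defined from the positive case by $\mu(e) = \mu(e^{-1})^{-1}$, it suffices to show: for every $g \in G$ and every positive edge $e \in E(\Gamma_G)^+$, $\mu(g\cdot e) = \mu(e)$. Writing $t(e) = \langle \alpha, h\rangle$ with $\alpha \in [1,|h|]$ and $o(e) = \langle \alpha - 1, h\rangle$, the target label is $\mu(e) = \xi(t(e)) = h(\alpha)$.

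To analyze $g\cdot e$ I would first pin down an explicit formula for the action on vertices. Set $c = com(g^{-1}, h)$ and $k = |c|$, so that the decompositions $g^{-1} = c\circ\tilde g$ and $h = c\circ\tilde h$ yield $gh = \tilde g^{-1}\circ \tilde h$ and $g = \tilde g^{-1}\circ c^{-1}$. Geometrically, $g\cdot \varepsilon = \langle |g|, g\rangle$ and the geodesic from $g\cdot\varepsilon$ to $gh\cdot\varepsilon$ first backtracks toward $\varepsilon$ by $k$ (reaching the branch point) and then moves forward to $gh\cdot\varepsilon$. Since $g$ acts by isometry, $g\cdot\langle\alpha,h\rangle$ is the point at distance $\alpha$ from $g\cdot\varepsilon$ along this geodesic, so
\[
g\cdot \langle \alpha, h\rangle \;=\; \begin{cases} \langle |g| - \alpha,\, g\rangle, & \alpha \le k,\\ \langle |g| - 2k + \alpha,\, gh\rangle, & \alpha \ge k,\end{cases}
\]
the two expressions agreeing at the branch point $\alpha = k$.

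I would then split into two cases. If $\alpha > k$, both endpoints of $e$ fall in the second regime, $g\cdot e$ remains positive with terminus $\langle |g|-2k+\alpha, gh\rangle$, and the position $|g|-2k+\alpha$ lies in the $\tilde h$-tail of $gh$, so $\mu(g\cdot e) = (gh)(|g|-2k+\alpha) = \tilde h(\alpha - k) = h(\alpha)$. If $\alpha \le k$, both endpoints fall in the first regime and $g\cdot o(e)$ lies strictly farther from $\varepsilon$ than $g\cdot t(e)$, so $g\cdot e$ is negative; a direct look-up of the position $|g| - \alpha + 1$ inside $g = \tilde g^{-1}\circ c^{-1}$ shows it lies in the $c^{-1}$-tail and equals $c(\alpha)^{-1}$, so after the defining inversion $\mu(g\cdot e) = c(\alpha) = h(\alpha)$, using $h = c\circ\tilde h$ and $\alpha \le k$.

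I expect the main obstacle to be the careful bookkeeping in the case $\alpha \le k$: the action flips the orientation of the edge, and one must correctly locate the relevant position inside $g = \tilde g^{-1}\circ c^{-1}$ and then apply the $\mu(e) = \mu(e^{-1})^{-1}$ rule. Conceptually, the positive edges along the geodesic from $\varepsilon$ to $h\cdot\varepsilon$ spell out the word $h$ letter by letter, and $g$ carries this geodesic isometrically onto the geodesic from $g\cdot\varepsilon$ to $gh\cdot\varepsilon$; equivariance of $\mu$ is precisely the assertion that this image geodesic, read off via $\mu$ from $g\cdot\varepsilon$, still spells $h$, which is just a geometric repackaging of the $\ast$-product structure in $CDR(\Lambda, X)$.
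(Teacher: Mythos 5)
Your proof is correct and follows essentially the same route as the paper: reduce to positive edges, write the edge via a representative $\langle \alpha, h\rangle$, use the explicit formula for the action of $g$ on points of $\Gamma_G$ in terms of $c(g^{-1},h)$, and verify the label in the two regimes (image edge negative when the edge lies in the cancelled prefix, positive otherwise). Your single piecewise formula merely consolidates the paper's four subcases into two, and the bookkeeping in the $\alpha \le k$ case checks out.
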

\begin{proof} 
Let $e = (v_0,v_1) \in E(\Gamma_G)^+$. Hence, there exists $g \in G$ such that $v_0 = \langle \alpha, g \rangle,\ v_1 = \langle \alpha + 1, g \rangle$. Let $f \in G$ and consider the following cases.

\smallskip

\noindent {\bf Case 1}. $c(f^{-1},g) = 0$

Then $f \ast g = f \circ g$. If $\alpha = 0$ then $f \cdot v_0 = \langle |f|, f \rangle = \langle |f|, f \circ g \rangle$, and $f \cdot v_1 = \langle |f| + 1, f \circ g \rangle$. Hence, $f \cdot e \in E(\Gamma_G)^+$ and $\mu(f \cdot e) = \xi(f \cdot v_1) = g(1) = \xi(v_1) = \mu(e)$.

\smallskip

\noindent {\bf Case 2}. $c(f^{-1},g) > 0$

\begin{enumerate}

\item[(a)] $\alpha + 1 \leqslant c(f^{-1},g)$

Then $f \cdot v_0 = \langle |f| + \alpha - 2\alpha, f \rangle = \langle |f| - \alpha, f \rangle$ and $f \cdot v_1 = \langle |f| - (\alpha + 1), f \rangle$. So, $d(\varepsilon, f \cdot v_1) < d(\varepsilon, f \cdot v_0)$ and $f \cdot e \in E(\Gamma_G)^-$. Now,
$$\mu(f \cdot e) = \mu((f \cdot e)^{-1})^{-1} = \mu((f \cdot v_1, f \cdot v_0))^{-1} = \xi(f \cdot v_0)^{-1} = f(|f|-\alpha)^{-1}$$
$$ = g(\alpha+1) = \xi(v_1) = \mu(e).$$

\item[(b)] $\alpha = c(f^{-1},g)$

We have $f \cdot v_0 = \langle |f| - \alpha, f \rangle$ and $f \cdot v_1 = \langle |f| + (\alpha + 1) - 2 c(f^{-1},g), f \ast g \rangle = \langle |f| - \alpha + 1, f \ast g \rangle$. It follows that $f \cdot e \in E(\Gamma_G)^+$ and $\mu(f \cdot e) = \xi(f \cdot v_1) = (f \ast g)(|f| - \alpha + 1)$. At the same time, $f \ast g = f_1 \circ g_1$, where $|f_1| = |f| - c(f^{-1},g) = |f| - \alpha, \ g = g_0 \circ g_1,\ |g_0| = \alpha$, so, $(f \ast g)(|f| - \alpha + 1) = g_1(1) = g(\alpha + 1)$ and $\mu(f \cdot e) = g(\alpha + 1) = \xi(\langle \alpha + 1, g \rangle) = \xi(v_1) = \mu(e)$.

\item[(c)] $\alpha > c(f^{-1},g)$

Hence, $f \cdot v_0 = \langle |f| + \alpha - 2c(f^{-1},g), f \ast g \rangle$ and $f \cdot v_1 = \langle |f| + \alpha + 1 - 2c(f^{-1},g), f \ast g \rangle$. Obviously, $f \cdot e \in E(\Gamma_G)^+$ and
$$\mu(f \cdot e) = \xi(f \cdot v_1) = (f \ast g)(|f| + \alpha + 1 - 2c(f^{-1},g)) = g_1(\alpha + 1 - c(f^{-1},g))$$
$$ = g(\alpha + 1) = \xi(v_1) = \mu(e),$$
where $f \ast g = f_1 \circ g_1,\ |f_1| = |f| - c(f^{-1},g) = |f| - \alpha,\ g = g_0 \circ g_1,\ |g_0| = \alpha$.
\end{enumerate}

Thus, in all possible cases we got $\mu(f \cdot e) = \mu(e)$ and the required statement follows.
\end{proof}

Let $v,w$ be two points of $\Gamma_G$. Since $\Gamma_G$ is a $\Lambda$-tree there exists a unique geodesic connecting $v$ to $w$, which can be viewed as a ``path'' is the following sense. A {\em path from $v$ to $w$} is a sequence of edges $p = \{ e_\alpha \},\ \alpha \in [1, d(v,w)]$ such that $o(e_1) = v,\ t(e_{d(v,w)}) = w$ and $t(e_\alpha) = o(e_{\alpha + 1})$ for every $\alpha \in [1, d(v,w) - 1]$. In other words, a path is an ``edge'' counterpart of a geodesic and usually, for the path from $v$ to $w$ (which is unique since $\Gamma_G$ is a $\Lambda$-tree) we are going to use the same notation as for the geodesic between these points, that is, $p = [v, w]$. In the case when $v = w$ the path $p$ is empty. The {\em length} of $p$ we denote by $|p|$ and set $|p| = d(v,w)$. Now, the {\em path label} $\mu(p)$ of a path $p = \{ e_\alpha \}$ is the function $\mu : \{ e_\alpha \} \rightarrow X^\pm$, where $\mu(e_\alpha)$ is the label of the edge $e_\alpha$.

\begin{lemma} \cite[Lemma 4]{KMS:2011a}
\label{le:paths}
Let $v,w$ be points of $\Gamma_G$ and $p$ the path from $v$ to $w$. Then $\mu(p) \in R(\Lambda, X)$.
\end{lemma}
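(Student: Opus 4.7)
The plan is to show that no two consecutive edges in $p$ carry inverse labels; since $\mu(p)$ is a function on a discrete chain, this local condition is exactly reducedness. Fix $\alpha \in [1, d(v,w) - 1]$, set $e = e_\alpha$, $f = e_{\alpha+1}$, and let $u_1 = o(e)$, $u_2 = t(e) = o(f)$, $u_3 = t(f)$. Because the path of length $d(v,w)$ between $v$ and $w$ in a $\Lambda$-tree is geodesic, $\alpha \mapsto u_\alpha$ is an isometry, so $d(u_1, u_3) = 2$. I would split into four cases according to the signs of $e$ and $f$ with respect to $\varepsilon$.

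If both $e$ and $f$ are positive, the distances from $\varepsilon$ strictly increase along $u_1, u_2, u_3$, so by the Y-criterion all three points lie on the geodesic $[\varepsilon, u_3]$. Pick $g \in G$ with $u_3 = \langle \beta, g \rangle$ for $\beta = d(\varepsilon, u_3)$; then $u_2 = \langle \beta - 1, g \rangle$ and $u_1 = \langle \beta - 2, g \rangle$, whence $\mu(e) = g(\beta - 1)$ and $\mu(f) = g(\beta)$. These are not inverses of each other because $g \in R(\Lambda, X)$ is reduced. The case when both edges are negative is symmetric, with $u_1$ playing the role farthest from $\varepsilon$.

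If $e$ is positive and $f$ is negative, then $d(\varepsilon, u_1) = d(\varepsilon, u_3) = d(\varepsilon, u_2) - 1$, and both $u_1, u_3$ lie on $[\varepsilon, u_2]$ at distance $1$ from $u_2$. Geodesic uniqueness in a $\Lambda$-tree then forces $u_1 = u_3$, contradicting $d(u_1, u_3) = 2$. Hence this case cannot occur.

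The remaining case, where $e$ is negative and $f$ is positive, is the essential ``turning point'' and is where the main difficulty lies. Set $\beta = d(\varepsilon, u_2)$ and write $u_1 = \langle \beta + 1, g_1 \rangle$, $u_3 = \langle \beta + 1, g_3 \rangle$ for suitable $g_1, g_3 \in G$. The equality $\langle \beta, g_1 \rangle = u_2 = \langle \beta, g_3 \rangle$ forces $c(g_1, g_3) \geqslant \beta$, while $u_1 \neq u_3$ forces $c(g_1, g_3) < \beta + 1$; discreteness of $\Lambda$ then yields $c(g_1, g_3) = \beta$, so $g_1(\beta + 1) \neq g_3(\beta + 1)$. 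Unwinding the definition of $\mu$ on negative edges gives $\mu(e) = g_1(\beta + 1)^{-1}$ and $\mu(f) = g_3(\beta + 1)$, which are therefore not inverses. This step is the crux, since it must extract the strict inequality $c(g_1, g_3) < \beta + 1$ from the tree-theoretic fact that the two branches at $u_2$ separate immediately, together with discreteness of $\Lambda$.
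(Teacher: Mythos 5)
Your proof is correct and hinges on the same key point as the paper's: the only place where reducedness could fail is at the turning point (the median $Y(\varepsilon,v,w)$), and there the definition of the equivalence relation on $S_G$ together with $c(g_1,g_3)=\beta$ forces the two outgoing edges to carry non-inverse labels. The paper organizes this globally --- writing $p$ as the concatenation of two geodesics emanating from the median, whose labels are reduced as subwords of $\mu([\varepsilon,v])$ and $\mu([\varepsilon,w])$ --- whereas you verify the local condition edge by edge, but the substance of the argument is the same.
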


As usual, if $p$ is a path from $v$ to $w$ then its {\em inverse} denoted $p^{-1}$ is a path from $w$ back to $v$. In this case, the label of $p^{-1}$ is $\mu(p)^{-1}$, which is again an element of $R(\Lambda, X)$.

Define
$$V_G = \{ v \in \Gamma_G \mid\ \exists\ g \in G:\ v = \langle |g|,g \rangle \},$$
which is a subset of points in $\Gamma_G$ corresponding to the elements of $G$. Also, for every $v \in \Gamma_G$ let
$$path_G(v) = \{ \mu(p) \mid\ p = [v,w]\ {\rm where}\ w \in V_G\}.$$
The following lemma follows immediately.

\begin{lemma}
\label{le:paths_2}
Let $v \in V_G$. Then $path_G(v) = G \subset CDR(\Lambda, X)$.
\end{lemma}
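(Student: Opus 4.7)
The plan is to use the fact that the vertex $v \in V_G$ has the form $v = \langle |g|, g\rangle = g \cdot \varepsilon$ for some $g \in G$, and that $G$ acts by isometries on $\Gamma_G$ preserving the edge-labeling $\mu$ (Lemma~\ref{le:label_edges}). This reduces computing $\mathit{path}_G(v)$ to computing $\mathit{path}_G(\varepsilon)$, and the latter should read off elements of $G$ directly.

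More precisely, I would first establish the base case $\mathit{path}_G(\varepsilon) = G$. Given $h \in G$, the points $\langle \alpha, h\rangle$ for $\alpha \in [0,|h|]$ form the geodesic from $\varepsilon$ to $h \cdot \varepsilon = \langle |h|, h\rangle \in V_G$; the edges along it are the positive edges $e_\alpha = (\langle \alpha-1,h\rangle, \langle \alpha, h\rangle)$, and by definition $\mu(e_\alpha) = \xi(\langle \alpha, h\rangle) = h(\alpha)$. Hence the path label from $\varepsilon$ to $h\cdot\varepsilon$ is literally the function $h : [1,|h|] \to X^\pm$, i.e.\ the element $h$ viewed inside $CDR(\Lambda, X)$. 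Conversely, every $w \in V_G$ is of the form $h \cdot \varepsilon$ for some $h \in G$, so $\mathit{path}_G(\varepsilon) = G$.

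For a general $v = g \cdot \varepsilon \in V_G$, I would use the $G$-action to transport paths. Given any $w = h \cdot \varepsilon \in V_G$, apply the isometry $g^{-1}$ to the geodesic $[v,w]$; because $g^{-1}$ is an isometry, it sends $[v,w]$ onto $[\varepsilon, g^{-1}\cdot w]$, and $g^{-1} \cdot w = (g^{-1} \ast h) \cdot \varepsilon \in V_G$. By the equivariance of $\mu$ proved in Lemma~\ref{le:label_edges}, $\mu([v,w]) = \mu([\varepsilon, (g^{-1} \ast h)\cdot \varepsilon])$, which by the base case equals $g^{-1} \ast h$. As $w$ ranges over $V_G$, equivalently $h$ ranges over $G$, and $g^{-1} \ast h$ ranges over the entire coset $g^{-1}G = G$. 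Therefore $\mathit{path}_G(v) = G$.

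I do not anticipate a significant obstacle: the only subtlety is making sure the image of a path under an isometry of the $\Lambda$-tree really is the (unique) path between the images of its endpoints, which is immediate from (T1)--(T3), and that the label transports correctly, which is exactly the content of Lemma~\ref{le:label_edges}. The rest is bookkeeping about the correspondence $\langle |h|,h\rangle \leftrightarrow h$ between $V_G$ and $G$.
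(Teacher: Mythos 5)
Your proof is correct and is exactly the argument the paper has in mind (the paper offers no proof, stating only that the lemma ``follows immediately''): the base case at $\varepsilon$ reads off $h$ from the labels $h(\alpha)$ along $[\varepsilon, h\cdot\varepsilon]$, and the general case transports via the $G$-action using the equivariance of $\mu$ from Lemma~\ref{le:label_edges}. The only cosmetic point is that the extension of Lemma~\ref{le:label_edges} from edges to paths, which you invoke, is stated in the paper just \emph{after} this lemma, but it is an immediate edge-by-edge consequence, so nothing is missing.
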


The action of $G$ on $E(\Gamma_G)$ extends to the action on all paths in $\Gamma_G$, hence, Lemma \ref{le:label_edges} extends to the case when $e$ and $f$ are two $G$-equivalent paths in $\Gamma_G$.

\section{Effective representation by infinite words}
\label{se:effect}

In this section we recall the notion of effective representation of a group by infinite words originally defined in \cite{KMS:2011a} and then introduce effective hierarchies for $\Z^n$-free groups.

\subsection{Infinite words viewed as computable functions}
\label{subs:comp_func}

Recall (see \cite{KMS:2011a} for details) that a group $G = \langle y_1, \ldots, y_m \rangle$ is said to have an {\em effective representation by $\Lambda$-words over an alphabet $X$} if $G \subset CDR(\Lambda, X)$ and
\begin{enumerate}
\item[(ER1)] for every $i \in [1, m]$, the $\Lambda$-word $y_i$, viewed as the function $y_i : [1, |y_i|] \to X^\pm$, is computable, that is, one can effectively determine $y_i(\alpha)$ for every $\alpha \in [1, |y_i|]$ and $i \in [1,m]$,
\item[(ER2)] for every $i, j \in [1, m]$ and every $\alpha_i \in [1, |y_i|],\ \alpha_j \in [1, |y_j|]$, one can effectively compute $c(h_i, h_j)$, where $h_i = y_i^{\pm 1} \mid_{[\alpha_i, |y_i|]},\ h_j = y_j^{\pm 1} \mid_{[\alpha_j, |y_j|]}$.
\end{enumerate}

Now, suppose $G = \langle Y \rangle$, where $Y = \{y_1, \ldots, y_m\}$, has an effective representation by $\Lambda$-words over $X$. 

Since every $y_i$ is computable, it follows that $y_i^{-1}$ is also computable for every $i \in [1,m]$. Next, the concatenation of two computable $\Lambda$-words is computable, as well as a restriction of a computable function to a computable domain. Thus, if $g_i \ast g_j = h_i \circ h_j$, where $g_i = y_i^{\delta_i} = h_i \circ c,\ g_j = y_j^{\delta_j} = c^{-1} \circ h_j,\ \delta_i, \delta_j = \pm 1$, then both $h_i$ and $h_j$ are computable as the restrictions $h_i = g_i \mid_{[1,\alpha]},\ h_j = g_j \mid_{[\alpha + 1,|g_j|]}$ for $\alpha = |c| = c(g_i^{-1},g_j)$, and so is $g_i \ast g_j$. Now, using (ER2) twice we can effectively compute $c((g_i \ast g_j)^{-1}, g_k)$, where $g_k = y_k^{\delta_k},\ \delta_k = \pm 1$. Indeed, $c((g_i \ast g_j)^{-1}, g_k) = c(h_j^{-1} \circ h_i^{-1}, g_k)$, so, if $c(h_j^{-1}, g_k) < |h_j^{-1}|$, then $c((g_i \ast g_j)^{-1}, g_k) = c(h_j^{-1}, g_k)$ which is computable by (ER2), and if $c(h_j^{-1}, g_k) \geqslant |h_j^{-1}|$, then $c((g_i \ast g_j)^{-1}, g_k) = |h_j| + c(h_i^{-1}, h_k)$, where $h_k = g_k \mid_{[|h_j|+1, |g_k|]}$ -- again, all components are computable and so is $c((g_i \ast g_j)^{-1}, g_k)$. 

From the above it follows that $y_i^{\pm 1} \ast y_j^{\pm 1} \ast y_k^{\pm 1}$ is a computable function for every $i, j, k \in [1, m]$. Continuing in the same way by induction one can show that every finite product of elements from $Y^{\pm 1}$, that is, every element of $G$ given as a finite product of generators and their inverses, is computable as a function defined over a computable segment in $\Lambda$ to $X^\pm$. Moreover, for any $g, h \in G$ one can effectively find $com(g,h)$ as a computable function. In particular, we automatically get a solution to the Word Problem in $G$.

\subsection{Effective hierarchy for $\Z^n$-free groups}
\label{subs:effect_h}

Now consider the case when $\Lambda = \Z^n$, where $\Z^n$ has the right lexicographic order. Recall that if $A$ and $B$ are ordered abelian groups, then the {\em right lexicographic order} on the direct sum $A \oplus B$ is defined as follows:
$$(a_1,b_1) < (a_2,b_2) \Leftrightarrow b_1 < b_2 \ \mbox{or} \ b_1 = b_2 \ \mbox{and} \ a_1 < a_2.$$
One can easily extend this definition to any number of components in the direct sum and apply it in the case of $\Z^n$ which is the direct sum of $n$ copies of $\Z$.

Every element $a \in \Z^n$ can be represented by an $n$-tuple $(a_1, \ldots, a_k, 0, \ldots, 0)$. We say that the {\em height} of $a$ is equal to $k$, and write $ht(a) = k$, if $a = (a_1, \ldots,$ $a_k, 0, \ldots, 0)$ and $a_k \neq 0$. If a group $G$ acts on a $\Z^n$-tree, then there is a Lyndon length function $l : G \to \Z^n$ and by the height $ht(g)$ of $g \in G$ we simply mean the height of its length $ht(l(g))$.

\smallskip

Consider a finitely generated $\Z^n$-free group $G$, where $n \in \N$. Using Bass-Serre theory one can represent $G$ as the fundamental group of a finite graph of groups with $\Z^{n-1}$-free vertex groups and maximal abelian (in the corresponding vertex groups) edge groups. Continuing this process inductively, one can obtain a finite hierarchy $\mathcal{G}$ of $\Z^k$-free groups, where $k < n$, such that $G$ can be built from groups in $\mathcal{G}$ by amalgamated free products and HNN-extensions along maximal abelian subgroups (see \cite{Serre:1980}, \cite{Bass:1991}). At the same time, by Theorem \ref{chis}, $G$ can be embedded into $CDR(\Z^n, X)$ for some alphabet $X$. Unfortunately, even if we know that the representation of $G$ by $\Z^n$-words over $X$ is effective, it does not give us effective representations of groups from the hierarchy $\mathcal{G}$ over $X$ and it is hard to use inductive arguments (if possible at all). Hence, in the case when $\Lambda = \Z^n$ we are going to introduce a stronger version of effective representation, which takes into account the hierarchical structure of $\Z^n$-free groups.

Suppose $n > 1$ and consider the $\Z^n$-tree $(\Gamma_G, d)$, which arises from the embedding of $G$ into $CDR(\Z^n, X)$ (see Section \ref{subs:universal} for details).

\smallskip

We say that $p, q \in \Gamma_G$ are {\em $\Z^{n-1}$-equivalent} ($p \sim q$) if $d(p, q) \in \Z^{n-1}$, that is, $d(p, q) = (a_1, \ldots, a_n),\ a_n = 0$. From metric axioms it follows that ``$\sim$'' is an equivalence relation and every equivalence class defines a {\em $\Z^{n-1}$-subtree} of $\Gamma_G$. Denote by $T_0$ the $\Z^{n-1}$-subtree of $\Gamma_G$ containing $\varepsilon$.

\smallskip

Let $\Delta_G = \Gamma_G / \sim$ and $\rho : \Gamma_G \rightarrow \Delta_G$ be the projection mapping. It is easy to see that $\Delta_G$ is a simplicial tree. Indeed, define $\widetilde{d} : \Delta_G \rightarrow \Z$ as follows:
\begin{equation}
\label{eq:new_length}
\forall\ \widetilde{p},\ \widetilde{q} \in \Delta_G:\ \ \widetilde{d}(\widetilde{p},\widetilde{q}) = k\ \ {\rm iff}\ \ d(p, q) = (a_1, \ldots, a_n)\ \ {\rm and}\ \ a_n = k.
\end{equation}
From metric properties of $d$ it follows that $\widetilde{d}$ is a well-defined metric on $\Delta_G$.

Since $G$ acts on $\Gamma_G$ by isometries, $p \sim q$ implies $g \cdot p \sim g \cdot q$ for every $g \in G$. Moreover, if $d(p, q) = (a_1, \ldots, a_n)$, then $d(g \cdot p, g \cdot q) = (a_1, \ldots, a_n)$, hence, $\widetilde{d}(g \cdot \widetilde{p}, g \cdot \widetilde{q}) = \widetilde{d}(\widetilde{p}, \widetilde{q})$. That is, $G$ acts on $\Delta_G$ by isometries, but the action is not free in general. From Bass-Serre theory it follows that $\Psi_G = \Delta_G / G$ is a graph in which vertices and edges correspond to $G$-orbits of vertices and edges of $\Delta_G$.

\begin{lemma}
\label{le:psi}
$\Psi_G$ is a finite graph.
\end{lemma}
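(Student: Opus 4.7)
The plan is to exploit the fact that $G$ is finitely generated together with the universality of $\Gamma_G$, invoking a standard Bass--Serre type covering argument. Fix a finite generating set $Y = \{y_1, \ldots, y_m\}$ of $G$ and the base point $\widetilde{\varepsilon} = \rho(\varepsilon) \in \Delta_G$.

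First I would show that
\[
\Gamma_G \;=\; \bigcup_{g \in G,\ 1 \leqslant i \leqslant m} g \cdot [\varepsilon, y_i \cdot \varepsilon].
\]
By construction every point of $\Gamma_G$ has the form $\langle \alpha, h \rangle$ for some $h \in G$, hence lies on the geodesic $[\varepsilon, h \cdot \varepsilon]$. Writing $h = y_{i_1}^{\epsilon_1} \cdots y_{i_k}^{\epsilon_k}$ with $\epsilon_j \in \{\pm 1\}$ and setting $g_j = y_{i_1}^{\epsilon_1} \cdots y_{i_j}^{\epsilon_j}$, the standard $\Lambda$-tree fact (following inductively from (T2)--(T3)) that the geodesic joining the endpoints of a broken path is contained in the union of its segments yields
\[
[\varepsilon, h \cdot \varepsilon] \;\subseteq\; \bigcup_{j=0}^{k-1} g_j \cdot [\varepsilon, y_{i_{j+1}}^{\epsilon_{j+1}} \cdot \varepsilon],
\]
and since $g_j \cdot [\varepsilon, y^{-1} \cdot \varepsilon] = (g_j y^{-1}) \cdot [\varepsilon, y \cdot \varepsilon]$, the covering by positive-generator segments follows.

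Next, applying the $G$-equivariant projection $\rho : \Gamma_G \to \Delta_G$ gives
\[
\Delta_G \;=\; \bigcup_{g \in G,\ i} g \cdot \rho\bigl([\varepsilon, y_i \cdot \varepsilon]\bigr).
\]
Each $\rho([\varepsilon, y_i \cdot \varepsilon])$ is a simplicial segment in $\Delta_G$ of $\widetilde{d}$-length equal to the last $\Z$-coordinate of $|y_i| \in \Z^n$ (by (\ref{eq:new_length}), since $d(\varepsilon, y_i \cdot \varepsilon) = |y_i|$), a non-negative integer. Hence the union $F = \bigcup_{i=1}^m \rho([\varepsilon, y_i \cdot \varepsilon])$ is a finite subgraph of $\Delta_G$, containing at most $1 + \sum_i k_i$ vertices and $\sum_i k_i$ edges, where $k_i$ is the last coordinate of $|y_i|$. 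Since $G \cdot F = \Delta_G$, the image of $F$ in $\Psi_G = \Delta_G / G$ is all of $\Psi_G$, so $\Psi_G$ is finite.

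The only step with any substance is the broken-path containment used in the first paragraph; once that $\Lambda$-tree generality is in hand, the rest is a bookkeeping exercise, and I do not anticipate any serious obstacle. The bound on $|V(\Psi_G)| + |E(\Psi_G)|$ that drops out of the argument depends only on the sum of the last $\Z$-coordinates of the generator lengths.
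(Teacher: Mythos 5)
Your proof is correct and takes essentially the same route as the paper's: both cover $\Gamma_G$ by $G$-translates of the finitely many generator geodesics using the broken-path containment, push this down to $\Delta_G$ via $\rho$, and conclude that $\Psi_G$ is the image of a finite subgraph. The only cosmetic difference is that the paper takes the subtree spanned by all $g_i^{\pm 1}\cdot\varepsilon$ at once rather than translating the negative-generator segments onto positive ones.
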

\begin{proof} 
Let $G = \langle g_1, \ldots, g_k \rangle$. Let $K$ be a subtree of $\Gamma_G$ spanned by $g_i^{\pm 1} \cdot \varepsilon,\ i \in [1, k]$. It is easy to see that if $|g_i| = (a_{i_1}, \ldots, a_{i_n}) \in \Z^n$, where $a_{i_n} \geqslant 0,\ i \in [1, k]$ then $\widetilde{K} = \rho(K) \subset \Delta_G$ is a finite subtree such that
$$|V(\widetilde{K})| \leqslant 2 \sum_{i=1}^n a_{i_n},$$
where $V(\widetilde{K})$ denotes the set of vertices in $\widetilde{K}$.

Now we claim that for every $q \in \Gamma_G$ there exists $p \in K$ and $g \in G$ such that $q = g \cdot p$. Indeed, since $\Gamma_G$ is spanned by $g \cdot \varepsilon,\ g \in G$, let $h \in G$ be such that $p = \langle \alpha, h \rangle,\ h = h_1 \cdots h_m$, where $h_j \in \{g_1^{\pm 1}, g_2^{\pm 1}, \ldots, g_k^{\pm 1}\}$. Then
$$[\varepsilon, h \cdot \varepsilon] \subset [\varepsilon,\ h_m] \cup [h_{m-1} \cdot \varepsilon,\ (h_{m-1} h_m) \cdot \varepsilon] \cup \cdots$$
$$ \cup [(h_1 \cdots h_{m-1}) \cdot \varepsilon,\ (h_1 \cdots h_m) \cdot \varepsilon].$$
It follows that $q \in [(h_j \cdots h_{m-1}) \cdot \varepsilon,\ (h_j \cdots h_m) \cdot \varepsilon]$ for some $j$ and $(h_j \cdots h_{m-1})^{-1} \cdot q = p \in [\varepsilon,\ h_m \cdot \varepsilon] \subset K$. So $q = g \cdot p$ for $g = h_j \cdots h_{m-1}$ as required.

From the claim it follows that $\Gamma_G$ is spanned by translates of $K$, so $\Delta_G$ is spanned by translates of $\widetilde{K}$. Hence, there can be only finitely many $G$-orbits of vertices and edges in $\Delta_G$, and $\Psi_G$ is a finite graph.
\end{proof}


From Lemma \ref{le:psi} it follows that the number of $G$-orbits of $\Z^{n-1}$-subtrees in $\Gamma_G$ is finite and it is equal to the number of vertices in $\Psi_G$.

Consider the graph $\Psi_G$ more closely. The set of vertices and edges of $\Psi_G$ we denote respectively by $V(\Psi_G)$ and $E(\Psi_G)$ so that the functions
$$\sigma : E(\Psi_G) \rightarrow V(\Psi_G), \ \ \tau : E(\Psi_G) \rightarrow V(\Psi_G),\ \ ^- : E(\Psi_G) \rightarrow E(\Psi_G)$$
of taking the initial vertex, terminal vertex, and inverting an edge satisfy the following conditions:
$$\sigma(\bar{e}) = \tau(e),\ \tau(\bar{e}) = \sigma(e),\ \bar{\bar{e}} = e,\ \bar{e} \neq e.$$
Let $\mathcal{T}$ be a maximal subtree of $\Psi_G$ and let $\pi : \Delta_G \rightarrow \Delta_G / G = \Psi_G$ be the canonical projection of $\Delta_G$ onto its quotient, so that $\pi(v) = G v$ and $\pi(e) = G e$ for every $v \in V(\Delta_G),\ e \in E(\Delta_G)$. There exists an injective morphism of graphs $\eta : \mathcal{T} \rightarrow \Delta_G$ such that $\pi \circ \eta = id_\mathcal{T}$ (see Section 8.4 of \cite{Cohen}), in particular, $\eta(\mathcal{T})$ is a subtree of $\Delta_G$. One can extend $\eta$ to a map (which we denote by $\eta$ again) $\eta: \Psi_G \rightarrow \Delta_G$ so that $\eta$ maps vertices to vertices, edges to edges, and $\pi \circ \eta = id_{\Psi_G}$ as follows. Choose an orientation $O$ of the graph $\Psi_G$ and let $e \in O - \mathcal{T}$. There exists an edge $e^\prime \in \Delta_G$ such that $\pi(e^\prime) = e$. Clearly, $\sigma(e^\prime)$ and $\eta(\sigma(e))$ are in the same $G$-orbit. Hence, $g \cdot \sigma(e^\prime) = \eta(\sigma(e))$ for some $g \in G$. Define $\eta(e) = g \cdot e^\prime$ and set $\eta(\bar{e}) = \overline{\eta(e)}$. Note that the vertices $\eta(\tau(e))$ and $\tau(\eta(e))$ are in the same $G$-orbit. Hence, there exists an element $\gamma_e \in G$ such that $\gamma_e \cdot \tau(\eta(e)) = \eta(\tau(e))$.

Put
$$G_v = Stab_G(\eta(v)),\ G_e = Stab_G(\eta(e))$$
and define boundary monomorphisms as inclusion maps $i_e : G_e \hookrightarrow G_{\sigma(e)}$ for edges $e \in \mathcal{T} \cup O$ and as conjugations by $\gamma_{\bar{e}}$ for edges $e \notin \mathcal{T} \cup O$, that is,
\[ i_e(g) = \left\{ \begin{array}{ll}
\mbox{$g$,} & \mbox{if $e \in \mathcal{T} \cup O$,} \\
\mbox{$\gamma_{\bar{e}} g \gamma_{\bar{e}}^{-1}$,} & \mbox{if $e \notin \mathcal{T} \cup O$.}
\end{array}
\right. \]
According to the Bass-Serre structure theorem we have
\begin{equation}
\label{eq:presentation}
G \simeq \pi(\mathcal{G},\Psi_G,\mathcal{T}) = \langle G_v \ (v \in V(\Psi_G)),\ \gamma_e\ (e \in E(\Psi_G)) \mid rel(G_v),
\end{equation}
$$\gamma_e i_e(g) \gamma_e^{-1} = i_{\bar{e}}(g)\ (g \in G_e),\ \gamma_e \gamma_{\bar{e}} = 1,\ \gamma_e = 1\ (e \in \mathcal{T})\rangle.$$

\smallskip

Let $\mathcal{K} = \rho^{-1}( \eta( \mathcal{T} ) ),\ \overline{\mathcal{K}} = \rho^{-1}(\eta(\Psi_G))$, hence, $\mathcal{K},\ \overline{\mathcal{K}}$ are subtrees of $\Gamma_G$ such that $\mathcal{K} \subseteq \overline{\mathcal{K}}$. Obviously $T_0 \subseteq \mathcal{K}$. Moreover, both $\mathcal{K}$ and $\overline{\mathcal{K}}$ contain finitely many $\Z^{n-1}$-subtrees, and meet every $G$-orbit of $\Z^{n-1}$-subtrees of $\Gamma_G$.

Every $v \in V(\Psi_G)$ lifts to a $\Z^{n-1}$-subtree $T_{\eta(v)}$ of $\Gamma_G$, where $\eta(v) = \rho(T_{\eta(v)})$. Clearly $Stab_G(\eta(v)) = Stab_G(T_{\eta(v)})$. 

Recall that $T_0$ is the $\Z^{n-1}$-subtree of $\Gamma_G$ containing $\varepsilon$. Hence, $T_0 \subset \mathcal{K}$ and $Stab_G(T_0)$ is a subgroup of $CDR(\Z^{n-1}, X)$. The stabilizer of any other $\Z^{n-1}$-subtree of $\mathcal{K}$ is conjugate to a subgroup of $CDR(\Z^{n-1}, X)$, as shown in the next lemma below. Before we prove it, recall (see \cite[Section 3.1]{Chiswell:2001}, for example) that the {\em axis} of an element $h$ of a group $H$ acting on a $\Lambda$-tree $\Gamma$ is the subset $Axis(h)$ of $\Gamma$ defined as follows:
$$Axis(h) = \{ p \in \Gamma \mid [h^{-1} p, p] \cap [p, h p] = \{p\} \}.$$
If $H$ is a $\Lambda$-free group, then every element acts hyperbolically on $\Gamma$ and $Axis(h)$ is a closed non-empty $\langle h \rangle$-invariant subtree of $\Gamma$. 

\begin{lemma}
\label{le:stab}
Let $T$ be a $\Z^{n-1}$-subtree of $\mathcal{K}$. Then
$$Stab_G(T) = f_T \ast K_T \ast f_T^{-1},$$
where $K_T$ is a subgroup of $CDR(\Z^{n-1}, X)$ (possibly trivial) and $f_T = \mu([\varepsilon, x_T])$ $\in CDR(\Z^n, X)$ for some point $x_T \in T$. Moreover, if $Stab_G(T)$ is not trivial, then $x_T \in Axis(g) \cap T$ for some $g \in Stab_G(T)$.
\end{lemma}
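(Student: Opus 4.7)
The idea is to realize $Stab_G(T)$ as the conjugate, by a word $f_T$ labeling a geodesic from $\varepsilon$ into $T$, of a naturally arising subgroup $K_T$ of $CDR(\Z^{n-1}, X)$. If $Stab_G(T) = \{1\}$, I would take $K_T = \{1\}$ and $x_T \in T$ arbitrary. Otherwise, pick a non-trivial $g \in Stab_G(T)$; since $G$ acts freely on $\Gamma_G$, $g$ is hyperbolic with an axis $Axis(g) \subseteq \Gamma_G$. Since $g$ stabilizes $T$ and the induced action of $g$ on the subtree $T$ is also free, $g|_T$ is hyperbolic on $T$ with some axis contained in $T$; by uniqueness of the axis of a hyperbolic isometry of a $\Z^n$-tree, this axis coincides with $Axis(g)$, so $Axis(g) \subseteq T$. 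Pick $x_T \in Axis(g) \cap T$ and set $f_T = \mu([\varepsilon, x_T])$, which is reduced by Lemma \ref{le:paths}.

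Define $\psi : Stab_G(T) \to R(\Z^n, X)$ by $\psi(h) = \mu([x_T, h \cdot x_T])$. Since $x_T$ and $h \cdot x_T$ both lie in the $\Z^{n-1}$-subtree $T$, the distance $d(x_T, h \cdot x_T)$ has last coordinate zero, so $\psi(h)$ is in fact a $\Z^{n-1}$-word. The main identity to establish is
\[
h \;=\; f_T \ast \psi(h) \ast f_T^{-1}, \qquad h \in Stab_G(T).
\]
For this I would first prove a general path-concatenation formula $\mu([u,v]) \ast \mu([v,w]) = \mu([u,w])$ for any $u,v,w \in \Gamma_G$, by splitting both paths at the $Y$-point of the tripod $\{u,v,w\}$ and checking that the cancellation in the $\ast$-product is exactly the shared stretch back to this $Y$-point. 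Combined with the equivariance of $\mu$ on paths (the straightforward extension of Lemma \ref{le:label_edges}), which gives $\mu([h\cdot x_T, h\cdot\varepsilon]) = \mu([x_T,\varepsilon]) = f_T^{-1}$, two applications of the concatenation formula to $h = \mu([\varepsilon, h\cdot\varepsilon])$ (breaking first at $x_T$, then at $h\cdot x_T$) yield the identity.

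Setting $K_T = \psi(Stab_G(T))$, the same two tools show that $\psi$ is a homomorphism:
\[
\psi(h_1 h_2) = \mu([x_T, h_1 x_T]) \ast \mu([h_1 x_T, h_1 h_2 x_T]) = \psi(h_1) \ast \psi(h_2),
\]
using equivariance $\mu([h_1 x_T, h_1 h_2 x_T]) = \mu([x_T, h_2 x_T])$ for the second equality. Freeness of the $G$-action on $\Gamma_G$ makes $\psi$ injective (if $\psi(h) = \varepsilon$ then $h$ fixes $x_T$, forcing $h=1$). Hence $K_T$ is a group under $\ast$, sitting inside $CDR(\Z^{n-1},X)$, and $Stab_G(T) = f_T \ast K_T \ast f_T^{-1}$, with the ``moreover'' clause built into the choice $x_T \in Axis(g) \cap T$. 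I expect the main obstacle to be the path-concatenation formula: although geometrically transparent, its conversion into a clean identity of $\Z^n$-words under $\ast$ requires careful bookkeeping of how the cancellation in the $\ast$-product matches the branching at the $Y$-point. A minor secondary point is to record that $f_T$ and the elements of $K_T$ are cyclically decomposable (not merely reduced), which is automatic for $K_T$ since it is closed under $\ast$ and inversion, and for $f_T$ follows from its being the label of a geodesic emanating from $\varepsilon$.
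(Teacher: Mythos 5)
Your proposal is correct and follows essentially the same route as the paper: choose $x_T \in Axis(g) \cap T$ for some non-trivial $g \in Stab_G(T)$, set $f_T = \mu([\varepsilon, x_T])$, and observe that conjugation by $f_T$ sends $Stab_G(T)$ onto a subgroup of $CDR(\Z^{n-1},X)$ because $x_T$ and $h \cdot x_T$ lie in the same $\Z^{n-1}$-subtree. The only differences are presentational: the paper cites Chiswell's Corollary 1.6 for $Axis(g) \cap T \neq \emptyset$ and compresses your map $\psi$ and the path-concatenation bookkeeping into the single line $|f_T^{-1} \ast g \ast f_T| \in \Z^{n-1}$, whereas you supply those details explicitly.
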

\begin{proof} 
If $Stab_G(T)$ is trivial then the statement obviously holds.

\smallskip

Suppose $Stab_G(T) \neq 1$ and let $g \in Stab_G(T)$. By Corollary 1.6 \cite{Chiswell:2005}, $Axis(g)$ meets every $\langle g \rangle$-invariant subtree of $\Gamma_G$. Since $T$ is $\langle g \rangle$-invariant, we have $Axis(g) \cap T \neq \emptyset$. Hence, choose some $x_T \in Axis(g) \cap T$ and put $f_T = \mu([\varepsilon, x_T])$. We have $g \cdot x_T \in T$, so $|f_T^{-1} \ast g \ast f_T| \in \Z^{n-1}$, in other words, $g = f_T \ast a_g \ast f_T^{-1},\ a_g \in CDR(\Z^{n-1}, X)$. Since $Stab_G(T)$ is a group, we have $K_T = \{ a_g \mid g \in Stab_G(T)\}$ is a subgroup of $CDR(\Z^{n-1}, X)$.
\end{proof}

Let $e$ be an edge of $\Psi_G$ such that $e \in O,\ e \notin \mathcal{T}$. Let $v = \sigma(\eta(e)) = \eta(\sigma(e)),\ w =\tau(\eta(e))$ and $u = \eta(\tau(e)) = \gamma_e \cdot w$. We have $u, v \in \eta(\mathcal{T}),\ w \notin \eta(\mathcal{T})$. Hence,
$$\gamma_e\ Stab_G(w)\ \gamma_e^{-1} = Stab_G(u).$$
By definition we have $i_e(G_e) \subseteq G_v = Stab_G(T)$, where $T = \rho^{-1}(v)$ and $i_{\bar{e}}(G_e) = \gamma_e G_e \gamma_e^{-1} \subseteq G_u = Stab_G(S)$, where $S = \rho^{-1}(u)$. Thus, we have $i_e(G_e) = f_T \ast A \ast f_T^{-1},\ i_{\bar{e}}(G_e) = f_S \ast B \ast f_S^{-1}$, where $A \leqslant K_T$ and $B \leqslant K_S$ are isomorphic abelian subgroups of $CDR(\Z^{n-1}, X)$. So,
$$\gamma_e \ast (f_T \ast A \ast f_T^{-1}) \ast \gamma_e^{-1} = f_S \ast B \ast f_S^{-1}$$
and it follows that $f_S^{-1} \ast \gamma_e \ast f_T = r_e \in CDR(\Z^n, X)$ so that $r_e \ast A \ast r_e^{-1} = B$. Thus, we have
$$\gamma_e = f_S \ast r_e \ast f_T^{-1}.$$
Observe that $r_e \in CDR(\Z^n, X) - CDR(\Z^{n-1}, X)$ because otherwise $\gamma_e \cdot T = S$, that is, $u = v,\ S = T$ and thus $\gamma_e \in Stab_G(T)$ - a contradiction.

\smallskip

Now, we can give an inductive definition of effective hierarchy for a finitely generated $\Z^n$-free group $G$. We say that $G$ has an {\em effective hierarchy over an alphabet $X$} if the following conditions are satisfied.
\begin{enumerate}
\item[(EFH1)] If $n = 1$, then $G$ has an effective representation by $\Z$-words over the alphabet $X$.
\item[(EFH2)] If $n > 1$, then in the presentation (\ref{eq:presentation}) for $G$ 
\begin{enumerate}
\item[(a)] each vertex group $G_v$ is given in the form $f_v \ast K_v \ast f_v^{-1}$, where $K_v$ has an effective hierarchy over $X$ (we assume that effective hierarchy is defined for $K_v$ by induction) and $f_v$ is a computable $\Z^n$-word over $X$,
\item[(b)] for each edge group $G_e$, its images in the corresponding vertex groups have effective representations over $X$,
\item[(c)] each $r_e$ in $\gamma_e$ represented as the product $\gamma_e = f_S \ast r_e \ast f_T^{-1}$ is given as a
computable $\Z^n$-word over $X$.
\end{enumerate}
\end{enumerate}

Observe that from the definition above it follows that effective hierarchy over $X$ implies effective representation over $X$.

\section{Effective regular completions}
\label{sec:reg_comp}

Let $G$ be a finitely generated subgroup of $CDR(\Z^n, X)$, where $\Z^n$ is ordered with respect to the right lexicographic order and $X$ is an arbitrary alphabet ($X$ can be infinite). We are going to construct a finitely generated subgroup $H$ of $CDR(\Z^n, Y)$, where $Y$ is a finite alphabet, such that the length function on $H$ induced from $CDR(\Z^n, Y)$ is regular and $G$ embeds into $H$. Moreover, the embedding preserves the length of elements of $G$. In other words, we are going to construct a finitely generated {\em $\Z^n$-completion} of $G$ (see \cite{KMRS:2012}). Finally, if $G$ has an effective hierarchy over $X$, then we show that the construction of $H$ is effective and it has an effective hierarchy over $Y$.

The argument is conducted by induction on $n$ as follows. $G \subset CDR(\Z^n, X)$ splits into the fundamental group of a finite graph of groups $\Psi_G$, where each vertex group is isomorphic to a subgroup of $CDR(\Z^{n-1}, X)$. Inductively we can assume that we can construct a regular completion for each vertex group and then we combine these regular completions to form a regular completion for $G$ itself. If the constructed regular completions for the vertex groups are effective, then the regular completion for $G$ is shown to be effective too.

\subsection{Simplicial case}
\label{subs:n=1}

Let $G$ be a finitely generated subgroup of $CDR(\Z, X)$. Hence, $\Gamma_G$ is a simplicial tree and $\Delta = \Gamma_G / G$ is a folded $X$-labeled digraph (see \cite{Kapovich_Myasnikov:2002}) with labeling induced from $\Gamma_G$. $\Delta$ is finite, which follows from the fact that $G$ is finitely generated and from the construction of $\Gamma_G$. Moreover, $\Delta$ recognizes $G$ with respect to some vertex $v$ (the image of $\varepsilon$) in the sense that $g \in CDR(\Z, X)$ belongs to $G$ if and only if there exists a loop in $\Delta$ at $v$ such that its label is exactly $g$.

The following lemma provides the required result.

\begin{lemma}
\label{le:n=1}
Let $G$ be a finitely generated subgroup of $CDR(\Z, X)$. Then there exists a finite alphabet $Y$ and an embedding $\phi : G \rightarrow H$, where $H = F(Y)$, inducing an embedding $\psi : \Gamma_G \rightarrow \Gamma_H$ such that
\begin{enumerate}
\item[(i)] $|g|_X = |\phi(g)|_Y$ for every $g \in G$,
\item[(ii)] if $A$ is a maximal abelian subgroup of $G$, then $\phi(A)$ is a maximal abelian subgroup of $H$,
\item[(iii)] if $a$ and $b$ are non-$G$-equivalent ends of $\Gamma_G$, then $\psi(a)$ and $\psi(b)$ are non-$H$-equivalent ends of $\Gamma_H$,
\item[(iv)] if $A$ and $B$ are maximal abelian subgroups of $G$, which are not conjugate in $G$, then $\phi(A)$ and $\phi(B)$ are not conjugate in $H$.
\end{enumerate}
Moreover, if $G$ has an effective representation by $\Z$-words over $X$, then $Y$ can be found effectively and the embedding $\phi : G \rightarrow H$ is effective.
\end{lemma}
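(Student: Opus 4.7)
The plan is to realize $H$ as a finite-rank free group by \emph{relabeling} the Stallings core graph of $G$ with a fresh finite alphabet. The excerpt already exhibits $\Delta = \Gamma_G / G$ as a finite folded $X^{\pm}$-labeled digraph with distinguished vertex $v_0$ (the image of $\varepsilon$) such that $G$ is precisely the set of $X$-labels of loops at $v_0$. I would choose a finite alphabet $Y$ in bijection with the set $E^+(\Delta)$ of positive edges, and relabel each positive edge $e$ by a distinct letter $y_e$ and each inverse edge $\bar e$ by $y_e^{-1}$. The resulting $Y$-labeled graph is trivially folded (all labels pairwise distinct) and embeds canonically as a subgraph of the Cayley tree of $H := F(Y)$; lifting to universal covers yields an embedding of trees $\psi : \Gamma_G \hookrightarrow \Gamma_H$ covered by a homomorphism $\phi : G \to H$ that sends $g$ to the $Y$-word read along its $g$-path in $\Delta$. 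Property (i) is immediate: the $g$-path has $|g|_G$ edges, no two consecutive edges are inverses (because $g$ is reduced in $X$), and fresh labels make distinct edges carry distinct $Y^{\pm}$-letters, so $\phi(g)$ is a reduced $Y$-word of length $|g|_G$.

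The key combinatorial observation driving (ii)--(iv) is that, under fresh labels, the $Y$-label of any reduced path in $\Delta$ determines its edge sequence uniquely, since every $Y^{\pm}$-letter identifies a unique edge and a reduced walk is built up edge by edge from its starting vertex. From this, (ii) follows: if $\phi(g) = w^k$ in $F(Y)$ with $k \geq 2$, then the $g$-path in $\Delta$ revisits the same block of $|w|$ edges $k$ times, so after the first $|w|$ letters one is back at $v_0$; hence $w$ is the $Y$-label of some loop $g_0 \in G$ and $g = g_0^k$, so primitives stay primitive and maximal abelian subgroups go to maximal abelian. For (iv) I would invoke the standard Stallings-graph fact that conjugacy classes of non-trivial cyclically reduced elements of $G$ correspond bijectively to cyclic loops in $\Delta$; the same observation shows distinct cyclic loops give non-conjugate cyclic $Y^{\pm}$-words, hence non-conjugate elements of $F(Y)$. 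For (iii), $G$-orbits of ends of $\Gamma_G$ are in bijection with eventual-equality classes of infinite reduced walks in $\Delta$ based at $v_0$, and $H$-orbits of ends of $\Gamma_H$ with eventual-equality classes of infinite reduced $Y^{\pm}$-words; reading off $Y$-labels is injective on such walks modulo eventual equality, so non-$G$-equivalent ends go to non-$H$-equivalent ends.

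For effectiveness, if each generator $g_i$ is a computable $\Z$-word over $X$, I would build $\Delta$ by Stallings folding starting from $m$ petals at $v_0$ labeled by the $g_i$; the primitive step of the algorithm is deciding when two edges incident to a common vertex share an $X^{\pm}$-label, which is exactly afforded by (ER1)--(ER2). The folding terminates in finite time with an explicit $\Delta$, from which $Y$ and $\phi$ are read off directly; since $n=1$, effective hierarchy (EFH1) reduces to effective representation by $\Z$-words over $Y$, which is immediate for the finitely generated free group $F(Y)$. I expect the main difficulty to lie not in any single verification but in consolidating the Stallings-graph dictionary---phrasing primitivity, conjugacy, and end-equivalence in $G$ purely in terms of the edge combinatorics of $\Delta$, so that the single principle ``fresh labels determine the edge sequence of a reduced path'' yields (ii), (iii), and (iv) by one uniform argument.
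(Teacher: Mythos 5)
Your proposal is correct and follows essentially the same route as the paper: relabel each edge of the finite folded quotient graph $\Delta = \Gamma_G/G$ by a distinct fresh letter, set $H = F(Y)$ with $Y$ the edge set, and use the fact that fresh labels make the edge sequence of a reduced path recoverable from its label to get (i)--(iv). The only cosmetic differences are that for (iv) you argue via cyclic loops while the paper deduces it from (iii) using the two ends of the axis of a maximal abelian subgroup, and in (ii) you should first conjugate to a cyclically reduced root (the paper's $q_1 q_2 q_1^{-1}$ decomposition) before reading off the repeated block.
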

\begin{proof} 
Since $G$ is finitely generated, there are only finitely many letters which are used in the representation of the generating set of $G$, so, $X$ can be assumed to be finite. Consider $\Delta = \Gamma_G / G$ and let $v \in V(\Delta)$ be the image of $\varepsilon$. Let $E = \{e_1,\ldots, e_k\}$ be the set of edges of $\Delta$ and $E_+$ an orientation on $E$. Take a copy of $\Delta$ denoted $\Delta'$, which has the set of edges $E'$ and orientation $E'_+$ corresponding to $E$ and $E_+$ in $\Delta$. Let $v' \in V(\Delta')$ correspond to $v \in V(\Delta)$. Introduce a labeling function $\mu'$ on edges of $\Delta'$ as follows: $\mu'(e_i) = e_i$ if $e_i \in E'_+$, and $\mu'(e_i) = e_i^{-1}$ if $e_i \in E' - E'_+$. Hence, $\mu' : E' \rightarrow E'$ and $\Delta'$ naturally becomes a $E'$-labeled digraph. There exists a natural isomorphism of graphs $\gamma : \Delta \rightarrow \Delta'$, which induces a natural isomorphism $\phi : G \rightarrow G'$, where $G' \leqslant F(E')$ is recognized by $\Delta'$ with respect to $v' \in V(\Delta')$. Let $Y = E'$ and $H
= F(Y)$. Since $G'$ is a subgroup of $F(Y)$, we obtain that $\Gamma_{G'}$ naturally embeds into $\Gamma_H$, which is the Cayley graph of $H$ with respect to $Y$. Now, $\phi : G \rightarrow G'$ induces an isomorphism between $\Gamma_G$ and $\Gamma_{G'}$, which gives an embedding $\psi : \Gamma_G \rightarrow \Gamma_H$.

\smallskip

Observe that under the assumption that $G$ has an effective representation by $\Z$-words over $X$, one can effectively enumerate $X$ and $\Delta$ can be constructed effectively from a finite wedge of loops labeled by finite words (which again can be found effectively) corresponding to the generators of $G$. Hence, $G'$ has an effective representation by $\Z$-words over a finite
alphabet $Y$ (that is found effectively) meaning that the embedding $G' \hookrightarrow H$ is effective. Hence, the embedding $G \hookrightarrow H$ is effective too. Now, we prove the required properties of this embedding.

\smallskip

First of all, from the construction of $\Delta'$ it follows that $|g|_X = |\phi(g)|_Y$ for every $g \in G$ and (i) follows.

\smallskip

Next, if $g \in G$ is not a proper power in $G$, then $\phi(g)$ is not a proper power in $F(Y)$. Indeed, if $\phi(g) \in G'$ is a proper power in $F(Y)$, then, due to one-to-one correspondence between edges of $\Delta'$ and their labels, there exists a reduced path $q = q_1 q_2 q_1^{-1} \in \Delta'$ at $v'$ such that $\mu'(q_2)$ is cyclically reduced, $q_2 = q_3^m,\ m > 1$ for some loop $q_3$ at $t(q_1)$, and $\mu'(q) = \phi(g)$. Hence, $\phi(g)$ is a proper power in $G'$ and $g = \mu(\gamma^{-1}(q))$ is a proper power in $G$. So, (ii) follows.

\smallskip

Now we prove (iii). Let $a$ and $b$ be two ends of $\Gamma_G$. Note that $a$ and $b$ correspond to unique infinite geodesic rays $r_a$ and $r_b$ in $\Gamma_G$ originating at $\varepsilon$, whose edges are labeled by $X^{\pm}$. Next, $r_a$ and $r_b$ correspond to reduced infinite paths $p_a$ and $p_b$ in $\Delta$ starting at $v$. Now, consider the images of $a$ and $b$ under $\psi$: these are the ends $\psi(a)$ and $\psi(b)$ of $\psi(\Gamma_G)$, hence, of $\Gamma_H$. They correspond to paths $p_{\psi(a)}$ and $p_{\psi(b)}$, which are the images of $p_a$ and $p_b$ under $\gamma : \Delta \rightarrow \Delta'$. Both $p_{\psi(a)}$ and $p_{\psi(b)}$ are reduced infinite paths in $\Delta'$ starting at $v'$. If $\psi(a)$ and $\psi(b)$ are $H$-equivalent then $\mu'(p_{\psi(b)}) = h \mu'(p_{\psi(a)})$, where $h \in H$. But since there exists one-to-one correspondence between edges of $\Delta'$ and their labels, it follows that $h = \mu'(p')$, where $p'$ is a loop at $v'$, and $p_{\psi(b)} = p' p_{\psi(a)}$. In other words, $h \in G'$ and the ends $\psi(a)$ and $\psi(b)$ are $G'$-equivalent. Finally, the loop $p'$ can be lifted to the loop $p = \gamma^{-1}(p')$ at $v$ in $\Delta$ 
such that $g = \mu(p) \in G$, which implies that $a$ and $b$ are $G$-equivalent. So, (iii) follows.

\smallskip

In order to prove (iv) observe that a maximal abelian subgroup $C$ of $G$ corresponds to a pair of ends of $\Gamma_G$, which are the ends of its axis $Axis(C)$. Now, (iv) follows from (iii).
\end{proof}

Lemma \ref{le:n=1} can be generalized as follows.

\begin{corollary}
\label{co:n=1}
Let $G$ be a finitely generated subgroup of $CDR(\Z, X)$. Assume that $\Gamma_G$ is embedded into a $\Z$-tree $T$, whose edges are labeled by $X^{\pm}$ so that the action of $G$ on $\Gamma_G$ extends to an action of $G$ on $T$, and there are only finitely many $G$-orbits of ends of $T$, which belong to $T - \Gamma_G$. Then there exists a finite alphabet $Y$, a $\Z$-tree $T'$, whose edges are labeled by $Y^{\pm}$, and a finitely generated subgroup $H \subseteq CDR(\Z, Y)$ such that $\Gamma_H$ is embedded into $T'$ so that the action of $H$ on $\Gamma_H$ extends to a regular action of $H$ on $T'$. Also, there is an embedding $\theta: T \to T'$, where $\theta(\Gamma_G) \subseteq \Gamma_H$, which indices an embedding $\phi : G \rightarrow H$ such that
\begin{enumerate}
\item[(i)] $|g|_X = |\phi(g)|_Y$ for every $g \in G$,
\item[(ii)] if $A$ is a maximal abelian subgroup of $G$, then $\phi(A)$ is a maximal abelian subgroup of $H$,
\item[(iii)] if $a$ and $b$ are non-$G$-equivalent ends of $T$, then $\theta(a)$ and $\theta(b)$ are non-$H$-equivalent ends of $T'$.
\end{enumerate}
Moreover, if
\begin{enumerate}
\item[(e1)] $G$ has an effective representation by $\Z$-words over $X$, and
\item[(e2)] a set of representatives $q_1, \ldots, q_m$ of $G$-orbits of ends of $T$, which belong to $T - \Gamma_G$ is given as a set of functions $q_i : [1, \infty) \to X^\pm$ so that each $q_i$ is computable,
\end{enumerate}
then $Y$ can be found effectively, $H$ has an effective representation by $\Z$-words over $Y$, and the embedding $\phi : G \rightarrow H$ is effective.
\end{corollary}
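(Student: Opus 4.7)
The plan is to bootstrap from Lemma~\ref{le:n=1}. Apply it to $G$ to obtain a finite alphabet $Y_0$, an embedding $\phi_0\colon G \hookrightarrow H_0 = F(Y_0)$, and an induced isometric embedding $\psi_0\colon \Gamma_G \hookrightarrow \Gamma_{H_0}$ satisfying (i)--(iv) of the lemma. Introduce fresh letters $y_1,\ldots,y_m$, one per orbit-representative $q_i$, put $Y = Y_0 \cup \{y_1,\ldots,y_m\}$, and set $H = F(Y) \leq CDR(\Z,Y)$ and $T' = \Gamma_H$. Then $H_0$ is a free factor of $H$, giving an embedding $\phi\colon G \hookrightarrow H$ through $\phi_0$, and $\Gamma_{H_0} \subseteq \Gamma_H = T'$. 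Regularity of the $H$-action on $T'$ is automatic since $H$ is a free group acting on its own Cayley tree.

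I would construct $\theta\colon T \to T'$ by extending $\psi_0$ as follows: for each $q_i$, locate the last point $b_i$ of $q_i$ lying in $\Gamma_G$ (detected by prefix-comparison with elements of $G$) and map the suffix of $q_i$ past $b_i$ isometrically onto the ray in $\Gamma_H$ starting at $\psi_0(b_i)$ and labeled $y_i y_i y_i \cdots$. Extend to the rest of $T - \Gamma_G$ equivariantly by $\theta(g \cdot x) = \phi(g) \cdot \theta(x)$. Every point of $T - \Gamma_G$ lies on such a $G$-translate, since it sits on a geodesic from $\varepsilon$ extending to a ray at an end $G$-equivalent to some $q_i$. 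Well-definedness and injectivity follow because $\mathrm{Stab}_G(q_i)$ is trivial (it must fix $b_i \in \Gamma_G$, but $G$ acts freely on $\Gamma_G$), so each $G$-orbit of suffixes is parametrized freely by $G$, and distinct orbits use distinct fresh letters and hence do not collide in $\Gamma_H$.

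Properties (i)--(iii) transfer as follows: (i) is inherited from the factorization $\phi = \iota \circ \phi_0$; (ii) follows because in $H = H_0 * F(y_1,\ldots,y_m)$ every root of a nontrivial element of $H_0$ lies in $H_0$ (by free-product normal-form), so maximal abelian (equivalently maximal cyclic) subgroups of $H_0$ remain maximal abelian in $H$; (iii) for ends lying in $\Gamma_G$ reduces to Lemma~\ref{le:n=1}(iii), once one notes that $H$-equivalence of two infinite reduced words using only letters from $Y_0^\pm$ forces the conjugating prefix to lie in $H_0$ by the same normal-form reason, and for ends in $T - \Gamma_G$ follows from the freshness of the $y_i$ separating distinct orbits. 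For effectiveness, the effective version of Lemma~\ref{le:n=1} yields $Y_0$, $H_0$, $\phi_0$ effectively, and the combination of computable $q_i$ with the prefix-comparison machinery of Section~\ref{subs:comp_func} effectively pinpoints each $b_i$; the remainder of the construction is a symbolic free-product extension, hence effective.

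The main obstacle is verifying that $\theta$ is a well-defined isometric embedding across $G$-orbits, i.e.\ that no point of $T - \Gamma_G$ admits two incompatible expressions as a $G$-translate of suffixes of distinct $q_i$. This boils down to the stabilizer-triviality observation above together with checking that the $y_i$-rays attached to different $\psi_0(b_i)$'s remain in disjoint branches of $\Gamma_H$, which is immediate from the freeness of $H$ over $Y$ and the fact that the $y_i$ are new letters not appearing in $Y_0$.
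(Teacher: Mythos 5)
There is a genuine gap: your construction of $\theta$ on $T - \Gamma_G$ implicitly assumes that the suffix rays $[b_i, q_i)$ and their $G$-translates are pairwise disjoint outside $\Gamma_G$, i.e.\ that $T - \Gamma_G$ contains no branch points. That is false in general: two non-$G$-equivalent ends, say $q_i$ and $g \cdot q_j$, can exit $\Gamma_G$ at the same vertex $b$ and share a nontrivial common segment $[b, c]$ with $c \notin \Gamma_G$ before diverging (this is exactly the situation produced in the intended application, where Lemma \ref{le:stab1}(c) provides finitely many $K_T$-orbits of branch points of $T$ outside $\Gamma_{K_T}$). Your map sends the segment $[b,c]$ both into the $y_i$-ray at $\psi_0(b)$ and into the $\phi(g)$-translate of the $y_j$-ray, and these are disjoint beyond $\psi_0(b)$ in $\Gamma_{F(Y)}$; so $\theta$ is not well defined, and no choice between the two images can make it an isometric embedding, since $d_T(x,y)$ for $x$ on the $q_i$-suffix and $y$ on the $g\cdot q_j$-suffix is computed through the branch point $c$, not through $b$. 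The ``freshness of the $y_i$'' that you invoke to rule out collisions is precisely what creates the inconsistency, because in $T$ the two rays genuinely do collide.

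The paper's proof is built around exactly this issue: it passes to $T/G$, notes that $T/G - \Delta$ has only finitely many branch points, cuts $T/G - \Delta$ at them into a finite set $P$ of segments and a finite set $R$ of branch-point-free terminal rays, and assigns one fresh letter per piece, so that an end of $T/G$ is relabeled by a product $w_1 \circ \cdots \circ w_k \circ \mu(r)$ of several new letters rather than by a single $y_i^{\infty}$. It then takes $H = \langle Y_1 \cup W\rangle$ with $W = \{\mu(p) \mid p \in P\}$ --- a proper subgroup of $F(Y)$ containing the labels of the paths to the branch points --- and builds $T'$ from the paths $p r$ with $\mu(p) \in H$. This makes $\theta$ well defined and isometric, places every branch point of $\theta(T)$ in the $H$-orbit of the base point (which is what the regularity claim, and its later use in Theorem \ref{th:completion}, requires), and yields (iii) because distinct pieces carry distinct letters. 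Accordingly, your effectiveness argument would also need to compute these branch points, i.e.\ the maximal common initial segments of the tails $q_i'$, $q_j'$, which your construction never considers.
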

\begin{proof} 
Consider $T / G$. Since there are only finitely many $G$-orbits of ends in $T - \Gamma_G$, we have that $T / G$ consists of $\Delta = \Gamma_G / G$ and a forest formed by finitely many infinite rays attached to some vertices of $\Delta$. These rays correspond to $G$-orbits of ends in $T - \Gamma_G$. By Lemma \ref{le:n=1} there exists a finite alphabet $Y_1$ and a relabeling of edges of $\Delta$ by $Y_1^{\pm}$, which induces embeddings $\psi: \Gamma_G \to \Gamma_{F(Y_1)}$ and $\phi : G \rightarrow F(Y_1)$ in such a way that
\begin{enumerate}
\item[(a)] $|g|_X = |\phi(g)|_{Y_1}$ for every $g \in G$,
\item[(b)] if $A$ is a maximal abelian subgroup of $G$, then $\phi(A)$ is a maximal abelian subgroup of $F(Y_1)$,
\item[(c)] if $a$ and $b$ are non-$G$-equivalent ends of $\Gamma_G$, then $\psi(a)$ and $\psi(b)$ are non-$F(Y)$-equivalent ends of $\Gamma_{F(Y_1)}$.
\end{enumerate}
Now, the new labeling of $\Delta$ can be extended to a labeling of $T / G$ as follows. Since there are only finitely many infinite rays in $T / G$, then there are only finitely many branch-points in $T / G - \Delta$. Hence, in $T / G - \Delta$ there are finitely many paths $p_{v, w}$ connecting either two adjacent branch-points $v$ and $w$ in $T / G - \Delta$, or a vertex $v$ of $\Delta$ , that an infinite ray from $T / G - \Delta$ is attached to, and a branch-point $w$ in $T / G - \Delta$. Denote the set of all such paths $p_{v, w}$ by $P$. Next, there are finitely many infinite rays $R = \{r_1,\ldots, r_m\}$ in $T / G - \Delta$, which do not contain any branch-points. Eventually, every infinite ray $r$ in $T / G - \Delta$ corresponding to a $G$-orbit of ends in $T - \Gamma_G$, attached to a vertex $u_a$ in $\Delta$ can be decomposed as the concatenation of paths
$$p_{u_a, v_1}\ p_{v_1, v_2}\ \cdots\ p_{v_{k-1}, v_k}\ r_i,$$
where $p_{u_a, v_1}, \ldots, p_{v_{k-1}, v_k} \in P$ and $r_i \in R$. Now for each $p \in P \cup R$, relabel all edges of $p$ by the same letter $a_p$ so that $a_p \neq a_q$ whenever $p \neq q,\ p,q \in P \cup R$, and let $Y_2 = \{a_p \mid p \in P \cup R\}$. Observe that $Y_2$ is finite even if $X$ is not finite. Let $W = \{ \mu(p) \mid p \in P \}$. Hence, the label of any infinite ray in $T / G - \Delta$ can be decomposed as
$$w_1\ w_2\ \cdots\ w_k\ \mu(r)$$
for some $w_i \in P,\ r \in R$.

\smallskip

Let $Y = Y_1 \cup Y_2$ and $H = \langle Y_1 \cup W \rangle \subset CDR(\Z, Y)$. Let $Q$ be the collection of all paths of the form $p r$, where $\mu(p) \in H$, and either $r \in R$, or $r$ is empty. Define $T' = Q / \sim$, where ``$\sim$'' stands for identification of common initial subwords for every pair $p_1, p_2 \in Q$. Hence, $T'$ is a $\Z$-tree labeled by $Y$, that contains $\Gamma_H$ as a subtree, on which $H$ acts regularly by left multiplication. $T'$ contains a copy of $T$, relabeled as shown above, which provides an embedding $\theta: T \rightarrow T'$, where $\theta(\Gamma_G) \subseteq \Gamma_H$. Finally, non-$G$-equivalent ends of $T$ are labeled by different letters in $\theta(T)$, so combined with (c) it implies (iii).

Assuming (e1), it follows that $Y_1$ can be found effectively by Lemma \ref{le:n=1}. Next, using (e2) we can find all branch-points in $T / G - \Delta$ effectively. Indeed, each $q_i$ can be effectively represented as the reduced concatenation $g_i \circ q'_i$, where $g_i \in F(X)$ and $q'_i$ is an infinite ray in $T / G - \Delta$ ($g_i$ can be found by ``reading'' the label of $q_i$ in $\Delta$ letter by letter, the process stops because $q_i$ cannot be read in $\Delta$ completely starting from $v$). Thus, one can find out effectively if $q'_i$ and $q'_j$ originate from the same vertex in $\Delta$ and, in the case they do, determine their maximal common initial segment, which gives a branch-point (since $q_i$ and $q_j$ are not $G$-equivalent for $i \neq j$, the maximal common initial segment of $q'_i$ and $q'_j$ is finite). The number of branch-points is bounded by the number of orbits of infinite rays in $T / G$, so eventually one can find all of them. Hence, $Y_2$ and $W$ can be found effectively, so $H$ has an effective representation by $\Z$-words over $Y$ and effectiveness of the embedding of $G$ into $H$ follows from the effectiveness of $\phi : G \rightarrow F(Y_1)$.
\end{proof}

\subsection{General case}
\label{subs:general}

Let $G$ be a finitely generated subgroup of $CDR(\Z^n, X)$ for some alphabet $X$. We are going to use the notations introduced in Subsection \ref{subs:effect_h}, that is, we assume that $\mathcal{K},\ \Psi_G,\ \Delta_G$ etc. are defined for $G$ as well as the presentation (\ref{eq:presentation}).

\smallskip

Our first step is to relabel edges of $\Gamma_G$ so that non-$G$-equivalent $\Z^{n-1}$-subtrees are labeled by disjoint alphabets.

\smallskip

Recall that every edge $e$ in $\Gamma_G$ is labeled by a letter $\mu(e) \in X^\pm$. Let $T$ be a $\Z^{n-1}$-subtree of $\mathcal{K}$ and $X_T$ a copy of $X$ (disjoint from $X$) so that we have a bijection $\pi_T : X \rightarrow X_T$, where $\pi_T(x^{-1}) = \pi_T(x)^{-1}$ for every $x \in X$. We assume $X_S \cap X_T = \emptyset$ for distinct $S, T \in \mathcal{K}$. Let $\Gamma'$ be a copy of $\Gamma_G$ and $\nu : \Gamma' \rightarrow \Gamma_G$ a natural bijection (the bijection on points naturally induces the bijection on edges). Denote $\varepsilon' = \nu^{-1}(\varepsilon)$.

Let $X' = \bigcup \{ X_T \mid T \in \mathcal{K} \}$. We introduce a labeling function $\mu' : E(\Gamma') \rightarrow X'^{\pm}$ as follows: $\mu'(e) = \pi_T(\mu(\nu(e)))$ if $\nu(e) \in T$. $\mu'$ naturally extends to the labeling of paths in $\Gamma'$. 

Recall that $V_G = \{ v \in \Gamma_G \mid\ \exists\ g \in G:\ v = \langle |g|,g \rangle \}$, that is, $V_G$ is the collection of points of $\Gamma_G$ that are in one-to-one correspondence with the set of all elements of $G$. Now, if $V' = \nu^{-1}(V_G)$, then define
$$G' = \{ \mu'(p) \mid p = [\varepsilon', v']\ {\rm for\ some}\ v' \in V'\}.$$

\begin{lemma}
\label{le:relabel}
$G'$ is a subgroup of $CDR(\Z^n, X')$, which acts freely on $\Gamma'$ and there exists an isomorphism $\phi : G \rightarrow G'$ preserving the length Lyndon function, that is, $L_\varepsilon(g) = L_{\varepsilon'}(\phi(g))$. Moreover, if $G$ has an effective hierarchy over $X$, then $G'$ has an effective hierarchy over $X'$.
\end{lemma}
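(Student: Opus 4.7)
The plan is to first extend the labeling $\mu'$ to all of $E(\Gamma')$ in a $G$-equivariant way (the definition in the statement only directly covers edges whose image under $\nu$ lies in $\cal K$), then verify the four claims step by step, with Lemma \ref{le:label_edges} as the main tool. Concretely, every $\Z^{n-1}$-subtree $S$ of $\Gamma_G$ belongs to the $G$-orbit of a unique $T_S \in \cal K$; pick any $g_S \in G$ with $g_S \cdot T_S = S$ and, for an edge $e \in E(\Gamma')$ with $\nu(e) \in S$, set $\mu'(e) := \pi_{T_S}(\mu(\nu(g_S^{-1} \cdot e)))$. The choice of $g_S$ is immaterial because $\mu$ is constant on $G$-orbits of edges (Lemma \ref{le:label_edges}), so the extension is well-defined and $G$-equivariant (where $G$ acts on $\Gamma'$ through $\nu$); it also agrees with the original definition on $\cal K$.

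Next I would show $G' \subseteq CDR(\Z^n, X')$. For $g \in G$ let $p_g$ be the path in $\Gamma'$ from $\varepsilon'$ to $\nu^{-1}(\langle |g|, g\rangle)$. Lemma \ref{le:paths} gives $\mu(\nu(p_g)) \in R(\Z^n, X)$; since each $\pi_T$ is a bijection $X^\pm \to X_T^\pm$ intertwining inversion and the $X_T$ are pairwise disjoint, no new cancellations can arise and $\mu'(p_g) \in R(\Z^n, X')$. Cyclic reducibility transfers by the same token: the cyclic decomposition of $g$ in $CDR(\Z^n,X)$ is matched edge-by-edge along the corresponding geodesics in $\Gamma_G$, then pushed through the appropriate $\pi_T$'s. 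Closure under $\ast$ uses the equivariance of $\mu'$: the path from $\varepsilon'$ to $\nu^{-1}(\langle |g_1 g_2|, g_1 g_2\rangle)$ splits at the midpoint $\nu^{-1}(\langle |g_1|, g_1\rangle)$, and the label of its second half, read after translating by $g_1$, coincides with $\mu'(p_{g_2})$; standard cancellation at the midpoint then gives $\mu'(p_{g_1 g_2}) = \mu'(p_{g_1}) \ast \mu'(p_{g_2})$.

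Define $\phi : G \to G'$ by $\phi(g) = \mu'(p_g)$. This is surjective by the definition of $G'$, injective because $g$ is recovered as the unique $h \in G$ with $\nu^{-1}(\langle |h|, h\rangle)$ at the end of $p_g$, and a homomorphism by the concatenation argument just given. Length preservation $L_\varepsilon(g) = L_{\varepsilon'}(\phi(g))$ is immediate since $\Gamma'$ carries the same $\Z^n$-metric as $\Gamma_G$ and $|\mu'(p_g)| = |p_g| = d(\varepsilon, \langle |g|,g\rangle) = |g|$. Freeness of the action of $G'$ on $\Gamma'$ is transported from the free action of $G$ on $\Gamma_G$ through the isometric bijection $\nu$ and the isomorphism $\phi$.

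For the effective-hierarchy clause I would transport each ingredient of the presentation (\ref{eq:presentation}) through $\phi$. By Lemma \ref{le:stab}, each vertex stabilizer has the form $G_v = f_{T_v} \ast K_{T_v} \ast f_{T_v}^{-1}$ with $K_{T_v} \leq CDR(\Z^{n-1}, X)$; then $\phi(G_v) = \phi(f_{T_v}) \ast \pi_{T_v}(K_{T_v}) \ast \phi(f_{T_v})^{-1}$, and $\pi_{T_v}(K_{T_v})$ is a subgroup of $CDR(\Z^{n-1}, X_{T_v}) \subseteq CDR(\Z^{n-1}, X')$ whose effective hierarchy is obtained from that of $K_{T_v}$ by pushing the data through the computable bijection $\pi_{T_v}$ (this is the step that uses the inductive hypothesis on the smaller $n$). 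Similarly, the words $f_{T_v}$ and $r_e$, being computable over $X$, yield computable words $\phi(f_{T_v})$ and $\phi(r_e)$ over $X'$ once we can effectively enumerate the $\Z^{n-1}$-subtrees of $\cal K$ and the associated alphabets. The hardest step is precisely this last effectiveness point: given an edge $e$ of $\Gamma'$, one must be able to effectively identify the $\Z^{n-1}$-subtree $S$ containing $\nu(e)$ and its $G$-translator to $\cal K$. This is obtained from the effective hierarchy of $G$ by reading the last coordinate of the distance in $\Z^n$ along $p_g$, which is computable from the hypotheses of Section \ref{subs:comp_func}; the rest is bookkeeping along the finite data of $\Psi_G$.
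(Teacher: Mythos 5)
Your proposal is correct and follows essentially the same route as the paper: the paper simply defines $\phi(g) = \mu'([\varepsilon', \nu^{-1}(v)])$ and declares all required properties immediate, and you use the same map while supplying the verifications it omits. Your equivariant extension of $\mu'$ beyond ${\cal K}$ (justified via Lemma \ref{le:label_edges}), and the checks of reducedness, closure under $\ast$, cyclic decomposability, and computability of the relabeling are exactly the details the paper leaves implicit, and they go through as you describe.
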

\begin{proof} 
Take $g \in G$. Since $g = \mu([\varepsilon, v]) \in CDR(\Z^n, X)$ for some $v \in V_G \subset \Gamma_G$, then define
$$\phi(g) = \mu'( [\varepsilon', v'] ) \in G',$$
where $v' = \nu^{-1}(v)$. All the required properties of $G'$ follow immediately.

The effectiveness part is obvious.
\end{proof}

According to Lemma \ref{le:relabel} we have $\Gamma' = \Gamma_{G'}$. Observe that the structure of $\Z^{n-1}$-trees in $\Gamma_{G'}$ is the same as in $\Gamma_G$. Hence, if ``$\sim$'' is a $\Z^{n-1}$-equivalence of points of $\Gamma_{G'}$ then $\Delta_{G'} = \Gamma_{G'} / \sim$ and $\Psi_{G'} = \Delta_{G'} / G'$ are naturally isomorphic respectively to $\Delta_G = \Gamma_G / \sim$ and $\Psi_G = \Delta_G / G$. So, with a slight abuse of notation let $X = X',\ G = G'$.

\smallskip

The next step is to refine the labeling so as to make the alphabet $X$ finite. To do this we have to analyze the structure of the $\Z^{n-1}$-subtrees of $\mathcal{K}$. Recall that if $x, y, z$ are points in a $\Lambda$-tree, then the intersection of the geodesics $[x, y] \cap [x, z]$ equals the geodesic $[x, w]$ for some point $w$ in the tree. This point is unique and we define $Y(x, y, z) = w$. Note that $Y(x, y, z)$ does not depend on the order of points in the triple $\{x, y, z\}$.

\begin{lemma}
\label{le:trivial_stab}
Let $T$ be a $\Z^{n-1}$-subtree of $\mathcal{K}$ such that $Stab_G(T)$ is trivial. Then $T$ contains only finitely many branch-points and each branch-point of $T$ is of the form $Y(\varepsilon, x, y)$, where $x, y \in \{x_S\ (S \in \mathcal{K}),\ \gamma_e^{\pm 1} \cdot \varepsilon\ (e \in \Psi_G)\}$. Moreover, if $G$ has an effective hierarchy over $X$, then all branch-points of $T$ can be found effectively.
\end{lemma}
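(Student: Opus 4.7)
The plan is to enumerate branch-points of $T$ by relating them to the finitely many $\Z^{n-1}$-subtrees of $\Gamma_G$ adjacent to $T$ in the collapsed tree $\Delta_G$. Since $Stab_G(T) = G_v$ is trivial for $v = \pi(\rho(T))$, Bass--Serre theory gives that for each edge $e$ of $\Psi_G$ at $v$ the multiplicity of lifts to edges of $\Delta_G$ at $\rho(T)$ is $[G_v : i_e(G_e)] = 1$, so these edges are in bijection with the edges of $\Psi_G$ at $v$. By Lemma \ref{le:psi}, $\Psi_G$ is finite, so there are only finitely many $\Z^{n-1}$-subtrees of $\Gamma_G$ adjacent to $T$, and consequently only finitely many points of $T$ where adjacent subtrees attach.

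The key step is to show that every branch-point of $T$ arises as such an attachment point and to express it in the form $Y(\varepsilon, x, y)$ with $x, y$ in the specified set. At a branch-point $b$, one direction is always the direction towards $\varepsilon$, while each of the other (at least two) divergent directions extends through $T$ to some adjacent $\Z^{n-1}$-subtree $S$ in $\Gamma_G$. We choose a representative $p$ of the direction in the distinguished set as follows: if $S \in {\cal K}$ take $p = x_S$, so that the geodesic $[\varepsilon, x_S]$ passes through $b$ in that direction; if $S = \gamma_e^{-1} \cdot S'$ for an edge $e \in O \setminus {\cal T}$ and some $S' \in {\cal K}$, take $p = \gamma_e^{-1} \cdot \varepsilon$, so that $[\varepsilon, \gamma_e^{-1} \cdot \varepsilon]$ must pass through $S$ and hence through $b$ in the required direction. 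Combining two such choices $p_1, p_2$ for two non-$\varepsilon$ directions at $b$ yields $b = Y(\varepsilon, p_1, p_2)$.

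For the effectiveness claim, the ingredients $\Psi_G$, ${\cal T}$, $\gamma_e$, and $x_S$ are all computable under an effective hierarchy over $X$: the first three from the Bass--Serre presentation (\ref{eq:presentation}), and the $x_S$ from Lemma \ref{le:stab} applied to the effectively-presented $K_S$. Each candidate $Y(\varepsilon, a, b)$ is computable via the common initial segment $c(a,b)$, so enumerating the finite list of candidates and selecting the genuine branch-points is effective. The main difficulty I anticipate is the exhaustiveness argument: ruling out branch-points of $T$ that are not attachment points of adjacent subtrees. This is where the triviality of $Stab_G(T)$ is essential, since it prevents the internal structure of $T$ from producing extra branching beyond what is already dictated by the adjacencies in $\Delta_G$; a careful argument must rigidly tie every branching direction in $T$ to a distinct lifted edge at $\rho(T)$.
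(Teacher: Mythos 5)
Your outline follows the same route as the paper's proof: finitely many adjacent subtrees because the edge multiplicities $[G_v : i_e(G_e)]$ collapse when $G_v$ is trivial, branch-points realized as medians $Y(\varepsilon, p_1, p_2)$ with $p_1, p_2$ among the $x_S$ and $\gamma_e^{\pm 1}\cdot\varepsilon$, and effectiveness read off from the hierarchy data. But the step you yourself flag as ``the main difficulty'' is precisely the content of the lemma, and you leave it unproved. You assert that every divergent direction at a branch-point of $T$ ``extends through $T$ to some adjacent $\Z^{n-1}$-subtree,'' yet a priori a branching direction could stay inside $T$, in which case the branch-point would not be an attachment point of an adjacent subtree, your candidate list would miss it, and even the finiteness claim would be unjustified (you only bound the attachment points).

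The missing argument --- which is where the triviality of $Stab_G(T)$ actually enters --- is this. Since $\Gamma_G$ is the union of the geodesics $[\varepsilon, g\cdot\varepsilon]$, $g \in G$, every direction at a branch-point $a$ of $T$ other than the one towards $\varepsilon$ contains some $g\cdot\varepsilon$, so $a = Y(\varepsilon, g\cdot\varepsilon, h\cdot\varepsilon)$ for suitable $g,h$. If in two distinct such directions the witnesses could not be chosen outside $T$, one would obtain distinct $g, h \in G$ with $g\cdot\varepsilon,\ h\cdot\varepsilon \in T$; then $h \ast g^{-1}$ is a nontrivial element carrying $T = g\cdot T_0$ to $h\cdot T_0 = T$, i.e.\ a nontrivial element of $Stab_G(T)$ --- a contradiction. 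Hence both witnesses may be taken outside $T$, in adjacent subtrees $S_1, S_2$, and only then does your identification of directions with (finitely many) lifted edges at $\rho(T)$ apply, after which the replacement of the witnesses by $x_{S_i}$ or $\gamma_e^{\pm 1}\cdot\varepsilon$ according to whether the corresponding edge of $\Psi_G$ lies in ${\cal T}$ goes through as you describe. The choice of representatives and the effectiveness discussion in your proposal otherwise match the paper.
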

\begin{proof} 
Suppose $a$ is a branch-point of $T$. Then there exist $x_1, x_2 \in \Gamma_G - T$ such that $a = Y(\varepsilon, x_1, x_2)$. Indeed, otherwise, from the construction of $\Gamma_G$ it follows that there exist distinct $g, h \in G$ such that $g \cdot \varepsilon,\ h \cdot \varepsilon \in T$, so $g^{-1} h \in Stab_G(T)$, which is a contradiction. Without loss of generality we can assume that $x_1 \in S_1,\ x_2 \in S_2$, where $S_1$ and $S_2$ are $\Z^{n-1}$-subtrees of $\Gamma_G$ adjacent to $T$. Observe that $S_1$ and $S_2$ belong to distinct $G$-orbits because $Stab_G(T)$ is trivial. Thus, the number of branch-points in $T$ is finite. Finally, the pair $(T, S_i),\ i = 1,2$ corresponds to an edge $e_i = (\pi(\rho(T)), \pi(\rho(S_i)))$ of $\Psi_G$. If $e_i \in \mathcal{T}$ then $x_i$ can be chosen to be $x_{S_i}$, otherwise  $x_i$ can be chosen to be $\gamma_e \cdot \varepsilon$.

The effectiveness part of the statement follows immediately.
\end{proof}

In particular, from Lemma \ref{le:trivial_stab} it follows that every $\Z^{n-1}$-subtree $T$ of $\mathcal{K}$ with a trivial stabilizer can be relabeled by a finite alphabet. Indeed, $T$ may be cut at its branch-points into finitely many closed segments and half-open rays that do not contain any branch-points. Then all these segments and rays can be labeled by different letters (all points in each piece is labeled by one letter).

In the case of non-trivial stabilizer the situation is a little more complicated.

\begin{lemma}
\label{le:stab1}
Let $T$ be a $\Z^{n-1}$-subtree of $\mathcal{K}$ such that $Stab_G(T) = f_T \ast K_T \ast f_T^{-1}$, where $K_T \subset CDR(\Z^{n-1}, X)$ is non-trivial. Then $\Gamma_{K_T}$ embeds into $T$ (the base-point of $\Gamma_{K_T}$ is identified with $x_T$), the action of $K_T$ on $\Gamma_{K_T}$ extends to the action of $K_T$ on $T$ and the following statements hold.
\begin{enumerate}
\item[(a)] Every end of $T - \Gamma_{K_T}$ is $K_T$-equivalent to one of the ends of a subtree with finitely many ends, which is the intersection of $T$ with the union of the segments $[\varepsilon, x_S],\ S \in \mathcal{K}$.
\item[(b)] Every end $a$ of $T - \Gamma_{K_T}$ extends the axis of some (possibly trivial) centralizer $C_a$ of $K_T$.
\item[(c)] There are only finitely many $K_T$-orbits of branch-points of $T - \Gamma_{K_T}$.
\item[(d)] If $K_T \subset CDR(\Z^{n-1}, Y)$ for some finite alphabet $Y$, then the labeling of $\Gamma_{K_T}$ by $Y$ can be $K_T$-equivariantly extended to a labeling of $T$ by a finite extension $Y'$ of $Y$.
\end{enumerate}
Moreover, if $G$ has an effective hierarchy over $X$, then
\begin{itemize}
\item the centralizer $C_a$ in (b) can be found effectively, 
\item representatives of $K_T$-orbits of branch-points of $T - \Gamma_{K_T}$ in (c) can be found effectively, and 
\item the new alphabet $Y'$ in (d) can be found effectively provided $Y$ is given.
\end{itemize}
\end{lemma}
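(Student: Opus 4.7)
The plan is to exploit the induced $K_T$-action on $T$ supplied by Lemma~\ref{le:stab}, then exhaust the ``new'' part of $T$ beyond $\Gamma_{K_T}$ using the finiteness of $\Psi_G$. Identifying $Stab_G(T) \cong K_T$ via $g \mapsto a_g = f_T^{-1} \ast g \ast f_T$, the orbit $K_T \cdot x_T \subseteq T$ spans an isometric, $K_T$-equivariant copy of $\Gamma_{K_T}$ with base-point $x_T$, using that $x_T \in Axis(g) \cap T$ for some $g \in Stab_G(T)$. This gives the embedding asserted in the preamble.

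For (a) and (c) I would work with the finite ``skeleton'' $F_T := T \cap \overline{\cal K}$. Because $\overline{\cal K} = \rho^{-1}(\eta(\Psi_G))$ is a finite union of segments of the form $[\varepsilon, x_S]$ for $S \in {\cal K}$, together with the connecting segments involving $\gamma_e$ for $e \in E(\Psi_G)$, its intersection with $T$ is a finite subtree. Any branch-point $p$ of $T$ outside $\Gamma_{K_T}$ exists because a neighboring $\Z^{n-1}$-subtree $S$ of $\Gamma_G$ attaches to $T$ at $p$; by Lemma~\ref{le:psi}, the $G$-orbits of such adjacencies are in bijection with edges of $\Psi_G$, so there are only finitely many $K_T$-orbits of such branch-points. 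Since $\overline{\cal K}$ meets every $G$-orbit of $\Z^{n-1}$-subtrees, each $K_T$-orbit has a representative in $F_T$, proving (c). For an end $a \notin \Gamma_{K_T}$ the ray $[x_T,a]$ must first leave $\Gamma_{K_T}$ at some branch-point, which can be $K_T$-translated into $F_T \subseteq T \cap (\bigcup_S [\varepsilon, x_S])$, yielding (a).

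For (b) I would analyze the ray $r = [x_T, a]$ for $a \notin \Gamma_{K_T}$. Beyond some initial segment $r$ never returns to $\Gamma_{K_T}$, and by (c) the branch-points along $r$ fall into finitely many $K_T$-orbits. Pigeonhole on the sequence of branch-points then produces an element $c \in K_T$ that translates a tail of $r$ into itself, hence $c$ acts as a translation along $r$'s tail; the axis of $c$ in $T$ must contain this tail. Since $K_T \leqslant CDR(\Z^{n-1},X)$ is $\Z^{n-1}$-free and therefore commutative-transitive, the centralizer $C_a := C_{K_T}(c)$ is abelian with a well-defined axis, and this axis, when extended inside $T$, terminates at $a$. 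The main obstacle I anticipate is making this pigeonhole step genuinely produce a translation (as opposed to a partial coincidence of rays); one has to invoke uniqueness of the axis of a hyperbolic isometry on $T$ together with the fact that $T$ is a $\Z^{n-1}$-tree on which $c$ indeed acts freely.

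For (d), parts (a) and (c) together decompose $T$ into $\Gamma_{K_T}$ together with finitely many $K_T$-orbits of attached ``external'' subtrees, rooted at finitely many $K_T$-orbits of branch-points in $F_T$. Choosing a fixed orbit representative for each and assigning a fresh letter to each orbit of edges (with the standard inversion convention) produces a $K_T$-equivariant extension of the labeling by a finite alphabet $Y' \supseteq Y$. Effectiveness is obtained by reading off each ingredient from the effective hierarchy: $f_T$ and $x_T$, the finite set $F_T$, the orbit representatives of branch-points, the element $c$ and its centralizer $C_a$, and finally the new labels in $Y' \setminus Y$ are all computable from the data of $G$ over $X$ together with the given effective presentation of $K_T$ over $Y$.
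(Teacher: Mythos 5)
Your construction of the embedding $\Gamma_{K_T}\hookrightarrow T$ and your reduction of (a) to the finiteness of $\Psi_G$ and the fact that $\overline{\cal K}$ meets every orbit match the paper, and your scheme for (d) (a fresh disjoint copy of the alphabet for each orbit of external pieces) is essentially the paper's. However, (b) and (c) --- which carry the real content of the lemma --- have genuine gaps. In (c) you deduce finiteness of the $K_T$-orbits of branch-points of $T-\Gamma_{K_T}$ from the bijection between adjacencies and edges of $\Psi_G$. That only bounds the $K_T$-orbits of \emph{ends} of $T$ outside $\Gamma_{K_T}$ (adjacent subtrees attach along ends of $T$, not at interior points), which is statement (a). A branch-point outside $\Gamma_{K_T}$ is a divergence point $Y(x_T, g\cdot a, h\cdot b)$ of two rays towards such ends, and a priori these form infinitely many orbits as $g,h$ range over the infinite group $K_T$. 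Showing they do not is exactly what the paper proves, via the height/centralizer machinery: if the overlap $[x_T,a)\cap[x_T,g\cdot a)$ is a closed segment whose endpoint $y$ lies outside $\Gamma_{K_T}$ and $w_a=\mu([x_T,y])$, then $ht(g)<ht(w_a)$, any other $h$ producing such an overlap lies in $C_{K_T}(g)$, all the resulting endpoints lie in a single $C_{K_T}(g)$-orbit, and cross-terms between different end representatives collapse by the same argument (Lemmas 8 and 9 of \cite{KMRS:2012}). Nothing in your proposal substitutes for this step.

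Your mechanism for (b) cannot be repaired as stated. If some $c\in K_T$ translated a tail of $[x_T,a)$ lying outside $\Gamma_{K_T}$ along itself, that tail would be contained in $Axis(c)$; but $Axis(c)$ is the minimal $\langle c\rangle$-invariant subtree and $\Gamma_{K_T}$ is $K_T$-invariant, so $Axis(c)\subseteq\Gamma_{K_T}$, contradicting that the tail avoids $\Gamma_{K_T}$. Hence the pigeonhole you describe can never produce such a $c$: the elements of $C_{K_T}(g)$ identifying branch-points along the ray move the ray off itself (they send $[x_T,a)$ to $[x_T,h\cdot a)$, which shares only an initial segment with it). The correct statement, and the paper's, is that $[x_T,a)$ contains the axis of the \emph{conjugate} $D_a$ of $C_a=C_{K_T}(g)$ by the element $w_a$, which has strictly greater height than everything in $C_a$; the end $a$ extends that conjugated axis, not the axis of an element of $K_T$. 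Since your (d) and the effectiveness claims are built on (b) and (c), these gaps propagate through the rest of the argument.
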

\begin{proof}
The statements that $\Gamma_{K_T}$ embeds into $T$ and the action of $K_T$ on $\Gamma_{K_T}$ extends to the action of $K_T$ on $T$ follow from the definition of $K_T$. Then, observe that (a) follows immediately from the structure of $\Gamma_G$ explained in detail in Subsection \ref{subs:effect_h}.

\smallskip

Let us prove (b). From (a) it follows that there exist only finitely many $K_T$-orbits of ends of $T - \Gamma_{K_T}$. Moreover, in every such orbit one can choose a representative $a$, which is an end of the intersection of $T$ with the union of the segments $[\varepsilon, x_S],\ S \in \mathcal{K}$. Each such $a$ is associated with an edge  $e = (\pi(\rho(T)), \pi(\rho(S)))$ of $\Psi_G$, where $S \in \mathcal{K}$: $e$ is essentially a pair of ends of $T$ and $S$ of full $\Z^{n-1}$-type, and the end of $T$ is exactly $a$. Next, $e$ is associated with two maximal abelian subgroups of $Stab_G(T)$ and $Stab_G(S)$. If $E_a < Stab_G(T)$ is such a maximal abelian subgroup, then $C_a = f_T^{-1} \ast E_a \ast f_T$ is a maximal abelian subgroup of $K_T$. Note that since $a$ is not an end of $\Gamma_{K_T}$, we have that $ht(C_a) < n - 1$ (by which we mean the maximal height of its elements), that is, the ends of $Axis(C_a)$ are not of full $\Z^{n-1}$-type. Hence, $a$ extends one of the ends of $Axis(C_a)$ to the full $\Z^{n-1}$-type.

\smallskip

(c) follows immediately from (a): every branch-point of $T - \Gamma_{K_T}$ is a $K_T$-translate of a branch-point of the intersection of $T$ with the union of the segments $[\varepsilon, x_S],\ S \in \mathcal{K}$. The union is finite, hence (c) is proved.

\smallskip

Now, let us prove (d). Let $a$ be an end from the finite set of representatives $\{a_1, \ldots, a_k\}$ of $K_T$-orbits of ends of $T - \Gamma_{K_T}$ chosen in part (b). As was proved in (b), $a$ extends one of the ends of $Axis(C_a)$ to the full $\Z^{n-1}$-type, where $C_a$ is a maximal abelian group of $K_T$. The axis $Axis(C_a)$ is an open interval $(\alpha, \beta)$ in $K_T$, where $\alpha$ and $\beta$ are ends of $K_T$ of $\Z^m$-type with $m < n - 1$. Assume that $a$ extends $\alpha$. Hence, $[x_T, a) -  \Gamma_{K_T}$ is an open interval $(\gamma, a)$, where $\gamma$ is an end of $T$ of $\Z^m$-type ``attached'' to $\alpha$. Note that if $C_a$ is non-trivial, then for any $g \in C_a$ we have $g \cdot \alpha = \alpha$ and $g \cdot a = a$, hence, $g \cdot (\gamma, a) = (\gamma, a)$. It follows that the translation along $Axis(C_a)$ induced by the action of $g$ propagates along $(\gamma, a)$ by the same distance. Hence, the label of $(\gamma, a)$ is periodic with multiple periods on various heights corresponding to finitely many generators of $C_a$. It follows that $(\gamma, a)$ can be relabeled by a disjoint copy of the alphabet $Y$.

Next, let $a, b \in \{a_1, \ldots, a_k\}$ be such that both $C_a$ and $C_b$ are non-trivial. If $[x_T, a) \cap [x_T, b)$ is not contained in $\Gamma_{K_T}$, then $a = b$. Indeed, since $[x_T, a) \cap [x_T, b)$ is not contained in $\Gamma_{K_T}$, the ends of $Axis(C_a)$ and $Axis(C_b)$ extended respectively by $a$ and $b$ must coincide in $\Gamma_{K_T}$. Hence, $C_a = C_b$ and in this case every $c \in C_a$ fixes both ends $a$ and $b$, which is possible only if $a = b$. If $[x_T, a) \cap [x_T, g \cdot a)$, where $g \in K_T$, is not contained in $\Gamma_{K_T}$, then $C_a = g C_a g^{-1}$, which implies that $g \in C_a$ and $g \cdot a = a$. Now, suppose $[x_T, a) \cap [x_T, g \cdot b)$, where $g \in K_T$, is not contained in $\Gamma_{K_T}$. Then, using the same argument as before, we obtain $C_a = g C_b g^{-1}$, but then $a = g \cdot b$, which is a contradiction since both $a$ and $b$ are representatives of different $K_T$-orbits of ends of $T - \Gamma_{K_T}$. Finally, if $[x_T, f \cdot a) \cap [x_T, h \cdot b)$, where $f, h \in K_T$, is not contained in $\Gamma_{K_T}$, then we obtain $f C_a f^{-1} = h C_b h^{-1}$ and $f \cdot a = h \cdot b$, which is again a contradiction. In other words, $K_T$-translates of any two ends $a, b \in \{a_1, \ldots, a_k\}$ that extend non-trivial centralizers of $K_T$ do not intersect outside of $\Gamma_{K_T}$. 

Finally, for each $a \in \{a_1, \ldots, a_k\}$ let $Y_a$ be a copy of the alphabet $Y$ so that $Y_a \cap Y_b = \emptyset$ if $a \neq b$. If $C_a$ is not trivial, then we relabel the open interval $[x_T, a) - \Gamma_{K_T}$ keeping required periodicity. Note that if $[x_T, a) \cap [x_T, b)$ is not contained in $\Gamma_{K_T}$, where $C_b$ is trivial, then $[x_T, a) \cap [x_T, b) = [x_T, y]$ and $[x_T, y] - \Gamma_{K_T}$ gets labeled by $Y_a$ as a part of $[x_T, a) - \Gamma_{K_T}$, while $[y, b)$ can be relabeled by $Y_b$ arbitrarily. 

Once the intersection of $T - \Gamma_{K_T}$ with the union of the segments $[\varepsilon, x_S],\ S \in \mathcal{K}$ is relabeled, one can $K_T$-equivariantly extend the new labeling from each $[x_T, a) - \Gamma_{K_T}$ to its images under the action of $K_T$.

\smallskip

The effectiveness part easily follows. Indeed, the centralizers of $K_T$ associated to the generators $\gamma_e^{\pm 1}, e \in \Psi_G$ are a part of the effective hierarchy for $G$. Every end $a$ of $T - \Gamma_{K_T}$ is $K_T$-conjugate to an end $b$, which is defined by some $\gamma_e^{\pm 1}$. Hence, if $a = g \cdot b$, then $C_a = g C_b g^{-1}$ and $C_b$ is a part of the effective hierarchy. Next, every segment in the finite union of the segments $[\varepsilon, x_S],\ S \in \mathcal{K}$ is represented by a computable function, hence, the intersection of $T$ with this union of segments can be found effectively. It follows that one can effectively find representatives of branch-points in $T - \Gamma_{K_T}$. Finally, in (d), if $Y$ is given effectively, then from the construction above it follows that $Y'$ is a disjoint union of finitely many copies of $Y$, so it can be found effectively as well.
\end{proof}

\begin{corollary}
\label{co:label}
If $G$ is a finitely generated subgroup of $CDR(\Z^n, X)$, where $X$ is an arbitrary alphabet, then there exists a finite alphabet $X'$ such that $G$ embeds into $CDR(\Z^n, X')$.
\end{corollary}
\begin{proof} 
Follows from Lemma \ref{le:trivial_stab} and Lemma \ref{le:stab1}.
\end{proof}

Note that we can assume that the underlying alphabet $X$ is finite (by virtue of Corollary \ref{co:label}), we construct a regular completion of $G$.

For a non-linear $\Z^{n-1}$-subtree $T$ of $\mathcal{K}$ with a non-trivial stabilizer, let $\mathcal{B}(T)$ be the set of representatives of branch-points of $T - \Gamma_{K_T}$. By Lemma \ref{le:stab1}, $\mathcal{B}(T)$ is finite and every branch-point of $T - \Gamma_{K_T}$ is $K_T$-equivalent to a branch-point from $\mathcal{B}(T)$. Let
$$\mathcal{D}(T) = \{ \mu([x_T,y]) \mid y \in \mathcal{B}(T) \}.$$
Observe that $\mathcal{D}(T)$ is a finite subset of $CDR(\Z^{n-1}, X)$.

\smallskip

Let $g \in G$. Hence, $[\varepsilon, g \cdot \varepsilon]$ meets finitely many $\Z^{n-1}$-subtrees $T_0, T_1, \ldots, T_k$, where $T_0$ is a $\Z^{n-1}$-subtree of $\mathcal{K}$ that contains $\varepsilon$ and $T_i$ is adjacent to $T_{i-1}$ for every $i \in [1, k]$. We have
$$[\varepsilon, g \cdot \varepsilon] \subseteq [x_{T_0}, x_{T_1}] \cup \cdots \cup [x_{T_{k-1}}, x_{T_k}] \cup [x_{T_k}, g \cdot \varepsilon].$$
Now, there exists $g_0 \in Stab_G(T_0)$ and a $\Z^{n-1}$-subtree $S_1$ of $\mathcal{K}$ adjacent to $T_0$ such that $T_1 = g_0 \cdot S_1$. Next, there exists $g_1 \in Stab_G(T_1)$ and a $\Z^{n-1}$-subtree $S_2$ of $\mathcal{K}$ adjacent to $S_1$ such that $T_2 = (g_1 g_0) \cdot S_2$, and so on. After $k$ steps we find a sequence of $\Z^{n-1}$-subtrees $S_0, S_1, \ldots, S_k$ from $\mathcal{K}$, where $S_0 = T_0, S_i\ {\rm is\ adjacent\ to}\ S_{i-1},\ i \in [1,k]$ and $T_i = (g_{i-1} \cdots g_0) \cdot S_i$, where $g_i \in Stab_G(T_i)$. Hence,
$$[\varepsilon, g \cdot \varepsilon] \subseteq [x_{T_0}, g_0 \cdot x_{T_0}] \cup [g_0 \cdot x_{T_0}, g_0 \cdot x_{S_1}] \cup [g_0 \cdot x_{S_1}, x_{T_1}] \cup [x_{T_1}, (g_1 g_0) \cdot x_{S_1}]$$
$$\cup [(g_1 g_0) \cdot x_{S_1}, (g_1 g_0) \cdot x_{S_2}] \cup \cdots \cup [(g_{k-1} \cdots g_0) \cdot x_{S_{k-1}}, (g_{k-1} \cdots g_0) \cdot x_{S_k}]$$
$$\cup [(g_{k-1} \cdots g_0) \cdot x_{S_k}, x_{T_k}] \cup [x_{T_k}, (g_k \cdots g_0) \cdot x_{S_k}],$$
where $(g_k \cdots g_0) \cdot x_{S_k} = g \cdot \varepsilon$.

Since
$$\mu([p,  q]) = \mu(g \cdot [p,  q]) = \mu([g \cdot p,  g \cdot q])$$
and
$$[(g_{i-1} \cdots g_0) \cdot x_{S_{i-1}}, (g_{i-1} \cdots g_0) \cdot x_{S_i}] = (g_{i-1} \cdots g_0) \cdot [x_{S_{i-1}}, x_{S_i}]$$
for $i \in [1,k]$, we obtain
$$\mu([(g_{i-1} \cdots g_0) \cdot x_{S_{i-1}}, (g_{i-1} \cdots g_0) \cdot x_{S_i}]) = \mu([x_{S_{i-1}}, x_{S_i}]).$$
Also, observe that for any $i \in [1,k]$
$$[(g_{i-1} \cdots g_0) \cdot x_{S_i}, x_{T_i}] \cup [x_{T_i}, (g_i \cdots g_0) \cdot x_{S_i}]$$
is a path in $T_i$, where $(g_{i-1} \cdots g_0) \cdot x_{S_i}$ and $(g_i \cdots g_0) \cdot x_{S_i}$ are $Stab_G(T_i)$-equivalent to $x_{T_i}$. So, it follows that
$$\mu([x_{T_i}, (g_{i-1} \cdots g_0) \cdot x_{S_i}]) = f_i \in K_{T_i},\ \ \mu([x_{T_i}, (g_i \cdots g_0) \cdot x_{S_i}]) = h_i \in K_{T_i}.$$
Next, note that $g_0 = \mu([x_{T_0}, g_0 \cdot x_{T_0}])$. Eventually, we have
$$g = g_0 \ast c_{S_0, S_1} \ast (f_1^{-1} \ast h_1) \ast c_{S_1, S_2} \ast \cdots \ast c_{S_{k-1}, S_k} \ast (f_k^{-1} \ast h_k),$$
where $c_{S_{i-1}, S_i}$ is the label of the path $[x_{S_{i-1}}, x_{S_i}]$ and the product on the right-hand side is defined in $CDR(\Z^n,X)$.

\smallskip

Now we are ready to perform the induction step.

\begin{theorem}
\label{th:completion}
Let $G$ be a finitely generated subgroup of $CDR(\Z^n, X)$ and assume that $\mathcal{K},\ \Psi_G,\ \Delta_G$ etc. are defined for $G$ as above. Suppose that for every non-linear $\Z^{n-1}$-subtree $T$ of $\mathcal{K}$ with a non-trivial stabilizer there exist
\begin{enumerate}
\item[(a)] an alphabet $Y(T)$,
\item[(b)] a $\Z^{n-1}$-tree $T'$, whose edges are labeled by $Y(T)$, and
\item[(c)] a finitely generated group $H_T \subset CDR(\Z^{n-1}, Y(T))$,
\end{enumerate}
such that $\Gamma_{H_T}$ is embedded into $T'$ and the action of $H_T$ on $\Gamma_{H_T}$ extends to a regular action of $H_T$ on $T'$. Moreover, assume that there is an embedding $\psi_T: T \rightarrow T'$, where $\psi_T(\Gamma_{K_T}) \subseteq \Gamma_{H_T}$, which induces an embedding $\phi_T : K_T \rightarrow H_T$, and such that if $a$ and $b$ are non-$K_T$-equivalent ends of $T$, then $\psi_T(a)$ and $\psi_T(b)$ are non-$H_T$-equivalent ends of $\psi_T(T)$.

Then there exists an embedding of $\displaystyle \bigcup_{T \in \mathcal{K}} \mathcal{D}(T)$ into $CDR(\Z^n, Y)$, where $Y$ is a finite alphabet containing $\displaystyle \bigcup_{T \in \mathcal{K}} Y(T)$, such that
\begin{enumerate}
\item[(i)] the union $\displaystyle \bigcup_{T \in \mathcal{K}} \left( H_T \cup \mathcal{D}(T) \cup \{c_{x_T, x_S} \mid S\ {\rm is\ adjacent\ to}\ T\ {\rm in}\ \mathcal{K} \} \right)$ generates a group $H \subset CDR(\Z^n, Y)$, which acts regularly on $\Gamma_H$ with respect to $\varepsilon_H$,
\item[(ii)] there exists an embedding $\psi: \Gamma_G \rightarrow \Gamma_H,\ \psi(\varepsilon_G) = \varepsilon_H$, which induces an embedding $\phi : G \rightarrow H$, such that if $a$ and $b$ are non-$G$-equivalent ends of $\Gamma_G$, then $\psi(a)$ and $\psi(b)$ are non-$H$-equivalent ends of $\psi(\Gamma_G)$.
\end{enumerate}
Moreover, if $G$ has an effective hierarchy over $X$ and for every non-linear $\Z^{n-1}$-subtree $T$ of $\mathcal{K}$ with a non-trivial stabilizer, the group $H_T$ has an effective hierarchy over $Y(T)$, then $H$ has an effective hierarchy over $Y$.
\end{theorem}
\begin{proof} First of all, by Corollary \ref{co:label} we can assume $X$ to be finite. Hence, we can assume that any two distinct $\Z^{n-1}$-subtrees $S$ and $T$ of $\mathcal{K}$ are labeled by distinct alphabets $X(S)$ and $X(T)$. Next, by Lemma \ref{le:trivial_stab}, in each $\Z^{n-1}$-subtree $S$ of $\mathcal{K}$ with trivial stabilizer there are only finitely many
branch-points, so we can cut $S$ along these branch-points, obtain finitely many closed and half-open segments, and relabel them by a finite alphabet. Thus, we can assume all this to be done already.

\smallskip

Let $T$ be a non-linear $\Z^{n-1}$-subtree of $\mathcal{K}$ with a non-trivial stabilizer. Observe that by Lemma \ref{le:stab1}, every end $a$ of $T$ either is an end of $\Gamma_{K_T}$, or $a = g \cdot a_0$, where $a_0$ belongs to a finite list of representatives of orbits of ends of $T - \Gamma_{K_T}$.

By the assumption, $T$ embeds into $T'$ labeled by $Y(T)$, while $\Gamma_{K_T}$ embeds into $\Gamma_{H_T}$, where $H_T$ acts regularly on $T'$. It follows that for every branch-point $b$ of $T$, the label of $\psi_T([x_T, b])$ defines an element of $H_T$. In particular, the label of $\psi_T(d)$ belongs to $H_T$ for every $d \in \mathcal{D}(T)$. Moreover, if $S_1, S_2$ are $\Z^{n-1}$-subtrees of $\mathcal{K}$ adjacent to $T$ and $a_{S_1}, a_{S_2}$ are the corresponding ends of $T$, then $a_{S_1}$ and $a_{S_2}$ are non-$K_T$-equivalent ends of $T$. Hence, by the assumption, $\psi_T(a_{S_1})$ and $\psi_T(a_{S_2})$ are non-$H_T$-equivalent ends of $T'$, and it follows that
$$(h_1 \cdot \psi_T([x_T, a_{S_1})) \cap (h_2 \cdot \psi_T([x_T, a_{S_2}))$$
is a closed segment of $T'$ for every $h_a, h_2 \in H_T$. Hence,
$$com(h_1 \ast c_{x_T, x_{S_1}}, h_2 \ast c_{x_T,x_{S_2}}),$$
is defined in $CDR(\Z^{n-1}, Y(T))$. Since $X(T) \cap X(S) = \emptyset$, we have that $h \ast c^{-1}_{x_T, x_S} = h \circ c^{-1}_{x_T, x_S}$ for every $\Z^{n-1}$-subtree $S$ of $\mathcal{K}$ adjacent to $T$. Thus, the union
$$H_T \cup \mathcal{D}(T) \cup \{ c_{x_T, x_S} \mid S\ {\rm is\ adjacent\ to}\ T\ {\rm in}\ \mathcal{K} \}$$
generates a subgroup $H'_T$ in $CDR(\Z^n, Q)$, where
$$Q = \bigcup_{T \in \mathcal{K}} Y(T),$$
so that $T$ embeds into $\Gamma_{H'_T}$. Moreover, $H'_T$ acts regularly on $\Gamma_{H'_T}$.

\smallskip

Now, from the fact that $X(T) \cap X(S) = \emptyset$ if $T$ is not $G$-equivalent to $S$, it follows that $\displaystyle \bigcup_{T \in \mathcal{K}} H'_T$ generates a subgroup $H$ of $CDR(\Z^n, Y)$, where $Y$ is a finite alphabet containing $Q$. Observe that $\Gamma_{H'_T}$ embeds into $\Gamma_H$ for each $T \in \mathcal{K}$. Moreover, for every $f, g \in H$ we have that $w =
Y(\varepsilon_H,\ f \cdot \varepsilon_H,\ g \cdot \varepsilon_H)$ belongs to $\Gamma_{H'_T}$ for some $T \in \mathcal{K}$, hence, the label of the segment $[\varepsilon_H, w]$ defines an element of $H'_T \subset H$. That is, $H$ acts regularly on $\Gamma_H$.

Next, since
$$G \leqslant \langle K_T,\ \{ \mathcal{D}(T) \mid T \in \mathcal{K} \} \rangle \leqslant H,$$
it follows that $G$ embeds into $H$.

\smallskip

Finally, every end $a$ of $\Gamma_G$ uniquely corresponds to an end in $\Delta_G$. Every end of $\Delta_G$ can be viewed as a reduced infinite path $p_a$ in $\Delta_G$ originating at $v \in \Delta_G$, which is the image of $\varepsilon \in \Gamma_G$. Observe that two ends $a$ and $b$ of $\Gamma_G$ are $G$-equivalent if and only if $\pi(p_a) = \pi(p_b)$ in $\Psi_G$ (recall that $\pi : \Delta_G \rightarrow \Delta_G / G = \Psi_G$).

Denote $\Delta_H = \Gamma_H / \sim$, where ``$\sim$'' is the equivalence of $\Z^{n-1}$-close points. Since $\psi : \Gamma_G \rightarrow \Gamma_H$ is an embedding, $\Delta_G$ embeds into $\Delta_H$ and, with abuse of notation, we are going to denote this embedding by $\psi$ again. Let $w = \psi(v)$.

Let $a$ and $b$ be non-$G$-equivalent ends of $\Gamma_G$ and let
$$p_a = v\ v_1\ v_2 \cdots,\ \ p_b = v\ u_1\ u_2 \cdots.$$
Assume that $\psi(a)$ and $\psi(b)$ are $H$-equivalent in $\Gamma_H$, that is, there exists $h \in H$ such that $h \cdot p_{\psi(a)} = p_{\psi(b)}$. Since $p_{\psi(a)}$ and $p_{\psi(b)}$ have the same origin $w$, it follows that $h \cdot w = w$, that is, $h \in Stab_H(T'_0)$, where $T'_0$ is the $\Z^{n-1}$-subtree of $\Gamma_H$ containing $\psi(T_0)$. Moreover, if $e_1 = (w, \psi(v_1)),\
f_1 = (w, \psi(u_1))$, then $h \cdot e_1 = f_1$ and it follows that $h \cdot a_1 = b_1$, where $a_1$ and $b_1$ are the ends of $\psi(T_0)$ corresponding to $e_1$ and $f_1$. By the assumption of the theorem, there exists $\phi(g_1) \in Stab_{\phi(G)}(\psi(T_0))$ such that $\phi(g_1) \cdot a_1 = b_1$, so, $\phi(g_1) \cdot \psi(v_1) = \psi(u_1)$. Since $\phi : G \rightarrow H$ and $\psi : \Gamma_G \rightarrow \Gamma_H$ are embeddings, it follows that $g_1 \cdot v_1 = u_1$ and $\pi(u_1) = \pi(v_1)$ in $\Delta_G$.

Continuing in the same way we obtain $\pi(u_i) = \pi(v_i),\ i \geqslant 1$ in $\Delta_G$, so, $a$ and $b$ are $G$-equivalent. Hence, 
$\psi(a)$ and $\psi(b)$ are $H$-equivalent, which is a contradiction with our assumption.

\smallskip

The effectiveness part follows from the effectiveness parts of Lemma \ref{le:trivial_stab} and Lemma \ref{le:stab1}.
\end{proof}

\begin{theorem}
\label{co:main}
Let $G$ be a finitely generated subgroup of $CDR(\Z^n, X)$, where $X$ is arbitrary. Then there exists a finite alphabet $Y$ and an embedding $\phi : G \rightarrow H$, where $H$ is a finitely generated subgroup of $CDR(\Z^n, Y)$ with a regular length function, such that $|g|_X = |\phi(g)|_Y$ for every $g \in G$. Moreover, if $G$ has an effective hierarchy over $X$, then $H$ has an effective hierarchy over $Y$
\end{theorem}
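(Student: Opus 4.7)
The argument would proceed by induction on $n$, built directly on the scaffolding of Sections 3 and 4. The base case $n = 1$ is Lemma \ref{le:n=1}: the free group $H = F(Y)$ constructed there is automatically regular in the sense of (R), preserves length, and the construction is effective under an effective representation.

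For the inductive step, assume the theorem holds for every finitely generated $\Z^{n-1}$-free group. Given $G \leq CDR(\Z^n, X)$, I would first normalize the setup of Subsection \ref{subs:general}: apply Lemma \ref{le:relabel} to make distinct $\Z^{n-1}$-subtrees of $\mathcal{K}$ carry disjoint sub-alphabets, use Lemma \ref{le:trivial_stab} to finitize the subtrees with trivial stabilizer, and invoke Corollary \ref{co:label} so that $X$ may be taken finite. This reduces the task to producing, for each non-linear $\Z^{n-1}$-subtree $T$ of $\mathcal{K}$ with non-trivial $Stab_G(T) = f_T \ast K_T \ast f_T^{-1}$, exactly the data required by the hypothesis of Theorem \ref{th:completion}: a finite $Y(T)$, a $\Z^{n-1}$-tree $T'$ labelled by $Y(T)$, a regular finitely generated $H_T \leq CDR(\Z^{n-1}, Y(T))$, and an embedding $\psi_T : T \to T'$ extending an embedding of $K_T$ into $H_T$ that sends non-$K_T$-equivalent ends to non-$H_T$-equivalent ends.

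To produce this data, I would apply the induction hypothesis to $K_T$ (a finitely generated subgroup of $CDR(\Z^{n-1}, X(T))$) to obtain an embedding $\overline{\phi}_T : K_T \to \overline{H}_T$ with $\overline{H}_T \leq CDR(\Z^{n-1}, \overline{Y}(T))$ regular and $\overline{Y}(T)$ finite. This handles $\Gamma_{K_T}$ but not all of $T$. By Lemma \ref{le:stab1}, the points of $T \setminus \Gamma_{K_T}$ form only finitely many $K_T$-orbits of ends and branch-points, each end extending the axis of a centralizer of $K_T$. For each orbit representative I would glue, to $\Gamma_{\overline{H}_T}$, a new branch labelled by a fresh disjoint copy of letters aligned with the image of the relevant centralizer axis, and close under the $\overline{H}_T$-action. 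This is a direct $\Z^{n-1}$-analogue of the construction in Corollary \ref{co:n=1}; the enlarged group $H_T$ is generated by $\overline{H}_T$ together with finitely many fresh-alphabet connector words and acts regularly on the resulting tree $T'$, with non-equivalence of ends preserved because the new alphabets are disjoint.

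With the required data assembled for every $T$, Theorem \ref{th:completion} then outputs a finite $Y \supseteq \bigcup_T Y(T)$, a finitely generated regular $H \leq CDR(\Z^n, Y)$ and a length-preserving embedding $\phi : G \to H$, which is exactly the desired conclusion. Effectiveness propagates throughout: (EFH2)(a) equips each $K_T$ with its own effective hierarchy, the induction hypothesis yields an effective $\overline{\phi}_T$, the effectiveness parts of Lemmas \ref{le:trivial_stab} and \ref{le:stab1} locate orbit representatives and centralizers, and Theorem \ref{th:completion} is itself effective. The main obstacle is the intermediate gluing step: one must extend $\Gamma_{\overline{H}_T}$ to $T'$ in a way that \emph{(a)} preserves regularity of $H_T$, and \emph{(b)} keeps non-$K_T$-equivalent ends of $T$ non-$H_T$-equivalent in $T'$. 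Disjointness of the fresh alphabets handles \emph{(b)} essentially for free, but \emph{(a)} requires careful bookkeeping of how the new letters attach to the axes of centralizers inside $\overline{H}_T$, so that the analogue of condition (R) holds not just on $\Gamma_{\overline{H}_T}$ but on the enlarged $T'$.
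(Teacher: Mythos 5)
Your proposal is correct and follows essentially the same route as the paper, which proves the theorem by induction on $n$ with Lemma \ref{le:n=1} as the base case and Theorem \ref{th:completion} as the inductive step. In fact you are more explicit than the paper's own (very terse) proof about how the hypotheses of Theorem \ref{th:completion} are supplied for each subtree $T$ --- namely by applying the induction hypothesis to $K_T$ and then performing the $\Z^{n-1}$-analogue of the gluing in Corollary \ref{co:n=1} --- which is exactly the intended reading.
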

\begin{proof} 
We use the induction on $n$. If $n = 1$, then the result follows from Lemma \ref{le:n=1}. Finally, the induction step follows from Theorem \ref{th:completion}. 
\end{proof}

\end{document}